\documentclass[11pt]{article}
\usepackage{amssymb, amsmath, amsthm, tikz, xspace,subfigure, graphicx}
\usepackage[left=1in,top=1in,right=1in]{geometry}
\usepackage{stmaryrd}

\usetikzlibrary{positioning, shapes.geometric, calc, intersections}
\tikzstyle{vertex}=[circle,fill=black,inner sep=2pt]
\tikzstyle{vertrect}=[draw,rectangle,inner sep=2pt]
\tikzstyle{vertdia}=[draw,diamond,inner sep=2pt]

\let\originalleft\left
\let\originalright\right
\renewcommand{\left}{\mathopen{}\mathclose\bgroup\originalleft}
\renewcommand{\right}{\aftergroup\egroup\originalright}

\newcommand{\paren}[1]{\left( #1 \right)}
\newcommand{\abs}[1]{\left| #1 \right|}
\newcommand{\brac}[1]{\left\{ #1 \right\}}

\newcommand{\ind}{\operatorname{ind}}

\theoremstyle{plain}
      \newtheorem{theorem}{Theorem}[section]
      \newtheorem{lemma}[theorem]{Lemma}
            \newtheorem{claim}[theorem]{Claim}

\theoremstyle{definition}
      \newtheorem{definition}[theorem]{Definition}

	\newcommand{\vep}{{\varepsilon}}
    \newcommand{\C}{{\mathcal{C}}}
\parindent=0pt
\parskip=6pt

\newcommand{\df}{\stackrel{\rm def}{=}}

\title{Inducibility of rainbow graphs}

\author{Emily Cairncross\thanks{Department of Mathematics, Statistics and Computer Science, University of Illinois Chicago, IL 60607. Email: emilyc10@uic.edu. Research partially supported by NSF Award
DMS-1952767.} \and Clayton Mizgerd\thanks{Department of Mathematics, Statistics and Computer Science, University of Illinois Chicago, IL 60607. Email: cmizge2@uic.edu. Research partially supported by NSF Award ECCS-2217023. } \and Dhruv Mubayi\thanks{Department of Mathematics, Statistics and Computer Science, University of Illinois Chicago, IL 60607. Email: mubayi@uic.edu. Research partially supported by NSF Awards 
DMS-1952767 and DMS-2153576, and by a Simons Fellowship.} }

\begin{document}

\maketitle

\begin{abstract}
Fix $k\ge 11$ and a rainbow $k$-clique $R$. We prove that the inducibility of $R$ is $k!/(k^k-k)$. An extremal construction is a balanced recursive blow-up of $R$.  This answers a question posed by Huang, that is a generalization of an old problem of Erd\H os and S\'os. It remains  open  to determine the minimum $k$ for which our result is true. 
More generally, we prove that there is an absolute constant $C>0$ such that every $k$-vertex connected rainbow graph  with minimum degree at least $C\log k$  has inducibility $k!/(k^k-k)$.
\end{abstract}

\section{Introduction}
Fix a graph $F$ on $k$ vertices and another graph $G$ on $n>k$ vertices. 
Write $I(F, G)$ for the number of $k$-subsets $S \subset V(G)$ 
such that $G[S] \cong F$ and let 
$$\varrho (F, G) := \frac{I(F, G) }{ \binom{n}{k}}.$$ Many  foundational questions in extremal graph theory deal with estimating $\varrho (F, G)$ for various choices of $F$ and $G$. One central question is to determine the minimum value when $F$ is a clique and $G$ has a specified edge density~\cite{Raz, N, R}, but there are also many fundamental questions about the maximum value regardless of edge density. This is the direction we take here.

Let $I(F, n)$ be the maximum  of 
$I(F, G)$ over all $n$ vertex graphs $G$. A standard averaging argument implies that
$$ \ind(F, n):= \frac{I(F, n)}{{\binom nk}} \le \frac{I(F, n-1)}{{\binom{n-1}{k}}} =\ind(F, n-1).$$

Thus, $\ind(F,n)$ is a decreasing sequence bounded below by zero, so it has a limit. Define the {\em inducibility} of $F$ to be  $$\operatorname{ind} (F) := \lim_{n\to \infty} \ind(F, n).$$ 

The {\em iterated balanced blow-up of a graph $F$} is a family $\mathcal{G}_F(n)$ of graphs on $n$ vertices defined inductively as follows.  Label $V(F)$ with $[k]:=\{1, \ldots, k\}$.  For $n < k$, the family $\mathcal{G}_F(n)$ contains only the empty graph on $n$ vertices.  For $n \geq k$, for any $G \in \mathcal{G}_F(n)$, we have a partition $V(G) = V_1 \cup \cdots \cup V_k$ with the following properties:
\begin{enumerate}
    \item For all $i,j \in [k]$, $\big| |V_i| - |V_j| \big| \leq 1$.
    \item For all $i \in [k]$, the induced subgraph $G[V_i] \in \mathcal{G}_F(|V_i|)$.
    \item For all $v \in V_i, w \in V_j$ with $i \ne j$, we have $vw \in E(G)$ if and only if $ij \in E(F)$.
\end{enumerate}
In many interesting cases, the construction above achieves the inducibility of $F$ and we now define this formally (our definition is slightly different than that in~\cite{LMP}).
\begin{definition}
    A graph $F$ is a {\em fractalizer} if 
  \[ \operatorname{ind} (F) = \lim_{n \to \infty} \max_{G \in \mathcal{G}_F(n)} \varrho \paren{F, G}.
    \]
In other words, the iterated balanced blow-up of $F$ achieves the inducibility.
       \end{definition}

The subgraph induced by every $k$-set comprising exactly one vertex in each $V_i$ is isomorphic to $F$. Consequently, for every $G \in \mathcal{G}_F(n)$,
$$I(F, G) \ge \sum_{i=1}^k I(F, G[V_i])+\prod_{i=1}^k|V_i|.$$
Together with a standard computation (see, e.g.~\cite{MR}), this yields
\begin{equation}\label{eqn:fractalizer}
    \operatorname{ind} (F) \geq \lim_{n \to \infty} \max_{G \in \mathcal{G}_F(n)} \varrho \paren{F, G} \ge \frac{k!}{k^k-k}.
\end{equation}
Hence, if $F$ is a fractalizer, then $\operatorname{ind}(F) \ge k!/(k^k-k)$.
In most cases we consider, the fact that $F$ is a fractalizer will imply further that $\operatorname{ind}(F)=k!/(k^k-k)$.

The fundamental conjecture in this area, due to Pippenger and Golumbic~\cite{PG}, states that for $k \ge 5$, the cycle $C_k$ is a fractalizer and satisfies $\operatorname{ind}(C_k)=k!/(k^k-k)$.
This conjecture has been resolved for $k=5$ by Balogh, Hu, Lidick\'y, and Pfender~\cite{BHLP} (see also~\cite{LMP}), but remains open for all $k \geq 6$. Kr\'al, Norin, and Volec~\cite{KNV} showed that  $I(C_k,n)\le 2n^k/k^k$.
More generally, Fox, Huang, and Lee~\cite{FHL} and Yuster~\cite{Y} independently proved that random graphs are fractalizers asymptotically almost surely.  Fox, Sauermann, and Wei~\cite{FSW} further proved that random Cayley graphs of abelian groups with small number of vertices removed are almost surely fractalizers.

We now consider these notions on colored and directed structures. A {\em tournament} is an orientation of a complete graph.  An edge-coloring of a graph or tournament $G$ is a function $\chi : E(G) \to T$ where  $T$ is a set of colors; we say that $G$ is $T$-colored. A colored graph or tournament $G$ is  {\em rainbow} if $\chi$ is injective. Two colored graphs (or tournaments) $G$ and $H$ are  {\em isomorphic}, written $G \cong H$, if there exists a bijection $\varphi : V(G) \to V(H)$ such that the colors (and orientations) of all edges are preserved under  $\varphi$. If $F$ is a colored tournament or colored complete graph, then $\operatorname{ind} (F)$ is defined identically as in the graph case, but with these altered definitions of graph isomorphism; naturally, the underlying graph $G$ should have the colors or orientations corresponding to $F$. If $F$ is an arbitrary colored graph, then we can color all missing edges with a single new color and view $F$ as a colored complete graph. Consequently, we can define {\em fractalizer} for all these structures.

There are very few results on the inducibility of colored, oriented, or directed structures. The first exact result which involved an iterated construction was due to Huang~\cite{Hstar} who determined the inducibility of the directed star. Later, in order to solve an old conjecture of Erd\H os and Hajnal~\cite{EH} in hypergraph Ramsey theory,  the third author and Razborov~\cite{MR} proved the following result for $k \ge 4$ (the case that $k=3$ was proven earlier by Conlon, Fox, and Sudakov~\cite{CFS}).

\begin{theorem}[\cite{MR}]\label{thm:MR}
    All rainbow tournaments $R$ on $k \ge 4$ vertices are fractalizers.  In particular, $\mathrm{ind}(R) = k!/(k^k-k)$.
\end{theorem}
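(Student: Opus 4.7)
The lower bound $\operatorname{ind}(R) \ge k!/(k^k-k)$ is immediate from \eqref{eqn:fractalizer} in the colored-tournament setting, since iterated balanced blow-ups of $R$ are themselves colored tournaments (each cross-part edge inherits the color and orientation of the corresponding edge of $R$). Thus it suffices to establish the matching upper bound $\operatorname{ind}(R) \le k!/(k^k-k)$; the fractalizer property then follows automatically, as the blow-ups achieve the inducibility.

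The rainbow hypothesis is used in two essential ways. First, any color- and orientation-preserving automorphism of $R$ must send each edge to the unique edge of the same color, and reversing an edge would flip its orientation, so every edge is fixed as an ordered pair, forcing the automorphism to be trivial. Hence $|\operatorname{Aut}(R)| = 1$, and each unordered copy of $R$ in a colored tournament $G$ corresponds to a unique labeled embedding $\phi : V(R) \to V(G)$. Equivalently, $I(R,G)$ equals the number of ordered $k$-tuples $(v_1,\dots,v_k)$ of distinct vertices in $G$ such that each edge $v_iv_j$ matches the color and orientation of $ij$ in $R$. Second, each ordered pair $(u,v)$ of distinct vertices of $G$ has at most one ``role'' $(i,j) \in V(R)\times V(R)$ it can play, namely the unique oriented edge of $R$ carrying its color (if any).

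The plan is to show $\operatorname{ind}(R,n) \to k!/(k^k-k)$ by induction on $n$, via a stability argument forcing a near-extremal $G$ to resemble a balanced $k$-blow-up of $R$. For each $v \in V(G)$ and each $i \in V(R)$, let $d_i(v)$ be the number of labeled copies of $R$ in $G$ in which $v$ plays role $i$; note that $\sum_{v,i} d_i(v) = k\cdot I(R,G)$. I would partition $V(G) = V_1 \cup \dots \cup V_k$ by assigning each vertex $v$ to the class $V_{i(v)}$ that maximizes $d_i(v)$. Using the rainbow property together with a local averaging argument, one expects to show that in any near-maximizer all but $o(n^2)$ cross-edges $vw$ with $v\in V_i, w\in V_j, i \neq j$ already have the type prescribed by the edge $ij$ of $R$. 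This yields the structural inequality
\[
I(R,G) \;\le\; \prod_{i=1}^k |V_i| \;+\; \sum_{i=1}^k I(R,G[V_i]) \;+\; o(n^k),
\]
and the recursion $c = 1/k^k + c/k^{k-1}$ then closes the induction with $c = 1/(k^k-k)$.

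The principal obstacle is the stability step: one must show that any deviation from the balanced blow-up template loses more transversal copies than it can possibly recover from recursive copies. The key should be a local exchange lemma --- if $v \in V_i$ carries a cross-edge of the wrong type, then either reassigning $v$ to a different class or modifying incident edges strictly increases $I(R,G)$ --- combined with a convexity argument forcing the $|V_i|$ to be balanced. The rainbow hypothesis is essential throughout because it makes ``edge type'' determined by color and orientation alone, so the local configuration around a typical vertex is highly constrained, rigidly pinning down the extremal structure in a way that has no analogue in the uncolored tournament setting.
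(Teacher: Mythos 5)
This theorem is quoted from \cite{MR}; the paper does not reprove it, but its Section~\ref{sec:rainbow_clique} adapts exactly the framework you outline, so your sketch can be judged against that. You have correctly identified the skeleton: the lower bound from the iterated blow-up, the role-counts $d_i(v)$, the partition $V_1\cup\cdots\cup V_k$ by dominant role, the decomposition of copies into those inside a part, good transversal copies, and bad copies, and the target inequality $I(R,G)\le \prod_i|V_i|+\sum_i I(R,G[V_i])+(\text{error})$. The rainbow observations (trivial automorphism group, each ordered pair having at most one admissible role) are correct and are indeed what makes the tournament case cleaner than the undirected one.

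However, the two steps you defer are precisely the content of the proof, and the mechanisms you propose for them would not work as stated. First, the logical structure: an induction on $n$ with an additive $o(n^k)$ error does not close, because that error is of the same order as the main term $a\binom{n}{k}$ and accumulates through the recursion. The actual argument is a contradiction argument at a single large scale: assume $\ind(R)=a+\gamma$, use the monotonicity of $\ind(R,n)$ to get $I(R,n_i)\le(\ind(R)+\vep)\binom{n_i}{k}$ for the sub-parts, bound the largest part by $(1-1/3k)n$, and invoke the convexity inequality $\sum_i p_i^k+(k^k-k)\prod_i p_i\le 1$ to force $\ind(R,n)<\ind(R)$. Second, the stability step is not done by a local exchange lemma, and you give no argument for it. The proof instead parametrizes by $\delta$, the density of miscolored cross-pairs $D$, shows that the good transversal count loses a term linear in $|D|$, and bounds the bad copies by a double count of incidences between pairs of $D$ and bad copies: each bad copy contains at least $k-2$ pairs of $D$, while each pair of $D$ lies in at most $O(p(zn,k-2))$ bad copies, where $z$ bounds the second-largest neighborhood. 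Establishing $z<1/2$ (or the analogous bound) in turn requires the minimum-degree lemma proved by Zykov symmetrization. Since the final bound is linear in $\delta$, only the two endpoints need checking. Without these quantitative inputs --- the symmetrization, the bound on $z$, the incidence count, and the endpoint analysis --- the claim that ``any deviation loses more transversal copies than it recovers'' is an assertion of the theorem rather than a proof of it.
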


In this paper, we consider the question addressed by Theorem~\ref{thm:MR} in the undirected setting.  The first conjecture in this setting is due to Erd\H os and S\'os from the 1970s (see~\cite[Equation (20)]{EH}), and implies, in particular, that a rainbow triangle is not a fractalizer. Their conjecture was proved by Balogh et.~al.~\cite{B}, who showed that a blow-up of a properly $3$-edge-colored $K_4$ (instead of a rainbow $K_3$) achieves the inducibility of the rainbow triangle.  See also~\cite{CY} for similar computations, but in terms of the number of edges of each color instead of the number of vertices.  

Huang~\cite{Huang} asked whether Theorem~\ref{thm:MR} can be extended to the undirected setting for cliques of size larger than three. This, in particular, would imply that the phenomenon conjectured by Erd\H os and S\'os and proved in~\cite{B} (that $K_k$ is not a fractalizer for $k=3$)  fails to hold for larger $k$.  Our first result  addresses Huang's question and proves that rainbow $K_k$ are fractalizers for $k \geq 11$.

\begin{theorem}\label{thm:main1}
    All rainbow cliques $R$ on $k \ge 11$ vertices are fractalizers.  In particular,
    \[ \mathrm{ind}(R) = \frac{k!}{k^k-k}. \]
\end{theorem}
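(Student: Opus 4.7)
The plan is to adapt the strategy of Mubayi and Razborov~\cite{MR} from rainbow tournaments to the undirected rainbow setting. Let $G$ be an $n$-vertex edge-colored complete graph (using the palette of $R$ together with one distinguished ``non-edge'' color) that maximizes $I(R,G)$; the goal is to show that $G$ is asymptotically a balanced blow-up of $R$, so that induction on $n$ combined with the lower bound in~\eqref{eqn:fractalizer} yields equality. For each $v \in V(G)$ and each $i \in [k]$, let $N_i(v)$ count the copies of $R$ in $G$ in which $v$ plays the role of vertex $i$, and let $\tau(v) \in [k]$ be an index maximizing $N_i(v)$. This gives a partition $V(G) = V_1 \cup \cdots \cup V_k$ that will become the candidate blow-up structure, and $I(R,G) = \tfrac{1}{k}\sum_{v}\sum_{i}N_i(v)$.

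The heart of the argument is a \emph{swap rigidity} lemma: extremality of $G$ should force that any two vertices of the same type have, up to lower-order terms, the same colored neighborhood on the remaining vertices. The reason is that ``cloning'' $u$'s edges onto $v$ (replacing $\chi(vw)$ by $\chi(uw)$ for all $w \ne u,v$) cannot strictly increase $I(R,G)$, and a convexity/averaging argument shows that if the two colored neighborhoods differ, then at least one of the two cloning operations (either $u \to v$ or $v \to u$) must strictly increase the $R$-count, contradicting maximality. The same swap philosophy applied across parts should then pin down the between-part structure: for $u \in V_i$ and $w \in V_j$ with $i \ne j$, the edge $uw$ must carry the color $\chi_R(ij)$, since otherwise a local edge recoloring destroys fewer $R$-copies than it creates. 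Combined with a standard Lagrangian computation (maximizing $\prod_i |V_i|$ subject to $\sum_i |V_i| = n$), this forces $|V_i| = n/k + O(1)$, and iterating the argument inside each $G[V_i]$ yields the matching upper bound $\mathrm{ind}(R) \le k!/(k^k - k)$.

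The main obstacle I expect is the between-part rigidity step. In the tournament setting of~\cite{MR}, the orientation of each edge provides extra information that rigidly distinguishes the $k$ vertices of $R$, and the underlying swap inequality is comparatively clean. In the undirected rainbow case one has only the multiset of incident edge-colors with which to identify a role, a weaker constraint that is precisely why the Erd\H{o}s--S\'os phenomenon prevents $K_3$ from being a fractalizer. The hypothesis $k \ge 11$ should enter through a quantitative counting inequality showing that, once $k$ is large enough, the $k-1$ distinct colors at a vertex of $R$ robustly determine its role under small perturbations of the coloring. Indeed, the more general statement advertised in the abstract, which replaces $k \ge 11$ by a minimum-degree condition $\Omega(\log k)$, strongly suggests that the true content is a concentration argument: each vertex of $R$ needs enough incident edges to serve as a unique ``fingerprint'' under random sampling, and a union bound over the $k$ possible roles produces the $\log k$ threshold. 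Converting this heuristic into a workable swap inequality, uniform over all candidate extremal configurations, is the technical crux of the proof.
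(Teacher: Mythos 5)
Your proposal diverges from the paper at its central step, and the divergence is where the gap lies. You want a \emph{swap rigidity} lemma: that extremality forces same-type vertices to have (essentially) identical colored neighborhoods, and forces every between-part edge $uw$ with $u \in V_i$, $w \in V_j$ to carry the color $\chi_R(ij)$, via local cloning or recoloring arguments. Neither claim follows from the maximality of $I(R,G)$ in the way you describe. The Zykov cloning argument (delete $y$, duplicate $x$) only yields that all \emph{degrees} $d(x)$ (number of $R$-copies through $x$) agree up to an additive $\binom{n-2}{k-2}$; it does not force any agreement of colored neighborhoods, since two vertices can lie in the same number of copies while having wildly different link structures, and cloning in either direction can then strictly \emph{decrease} the count. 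Likewise, recoloring a single miscolored edge $uw$ can destroy up to $\Theta(n^{k-2})$ copies in which $u$ and $w$ play roles other than $i$ and $j$, and there is no a priori reason the recoloring creates more; making such a local-move inequality ``uniform over all candidate extremal configurations,'' as you put it, is exactly what one cannot do directly. The paper never proves that the extremal $G$ is a blow-up. Instead it partitions the copies of $R$ into those inside a part, the correctly colored transversal ones, and all others, and bounds the total by an expression that is \emph{linear} in the miscoloring density $\delta = |D|/\binom{n}{2}$ (inequality \eqref{eqn:bound1}); it then only needs to verify the bound at the two endpoints $\delta = 0$ and $\delta$ maximal. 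This sidesteps rigidity entirely: large $\delta$ is not ruled out structurally, it is simply shown to be incompatible with the minimum-degree count.

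Two further points. First, the quantities you partition by (the \emph{counts} $N_i(v)$ of copies in which $v$ plays role $i$) differ from the paper's (the \emph{sets} $N_i(x)$ of possible partners when $x$ plays role $i$); the paper needs the set version because the key upper bounds on $d_i(x)$ come from products over the color classes of $N_i(x)$, and the bound on the second-largest such set (Lemma \ref{lem:nbhd_z1}) is what controls the bad copies. Second, your guess about where $k \ge 11$ enters is off: it is not a fingerprinting/union-bound phenomenon but a handful of explicit numerical inequalities — the color-degree bound $\alpha \le 0.4$, the second-neighborhood bound $z \le 0.5$, and the final comparison $7.5(k-1)z^{k-2} < 1/50$ — each of which fails for small $k$. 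As written, your outline would need the rigidity lemma proved from scratch, and I do not see a route to it that does not essentially reconstruct the paper's $\delta$-linearization argument.
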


We make the following observations regarding Theorem~\ref{thm:main1}.
\begin{itemize}
    
        \item Theorem~\ref{thm:main1} implies Theorem~\ref{thm:MR} for $k \geq 11$, since any construction of a tournament inducing $\ell$ rainbow copies of $R$ yields a corresponding construction of a complete graph that induces at least $\ell$ rainbow (undirected) copies of $R$  by ignoring orientations.

        \item Similarly, if a graph $G$ on $k$ vertices is known to have inducibility $k! /( k^k - k)$, then the rainbow $k$-clique is a fractalizer as well by the following argument. Let $R$ be a rainbow $k$-clique and let $e_1, e_2, \ldots, e_m \in E(R)$ such that $(V(R), \{ e_1, \ldots, e_m \} )$ is a rainbow copy of $G$. Let $c_1, \ldots, c_m$ be the colors assigned to $e_1, \ldots, e_m$, respectively. Then any construction of an edge-colored graph inducing $\ell$ rainbow copies of $R$ induces at least $\ell$ copies of $G$ by deleting all edges except those colored by $c_1, \ldots, c_m$ and then ignoring the edge colors. It follows that $\ind (R) \le \ind(G) = k!/ (k^k -k)$, so $R$ is a fractalizer. Thus, the result of~\cite{BHLP} that $C_5$ is a fractalizer with $\ind (C_5) = 5!/(5^5 - 5)$ implies that the rainbow $5$-clique is a fractalizer, and the result of~\cite{FHL} that random graphs are almost surely fractalizers implies that the rainbow $k$-clique is a fractalizer for large $k$.

        \item We believe our proof of Theorem~\ref{thm:main1} has been optimized and requires $k \ge 11$. As~\cite{BHLP} showed that rainbow $5$-cliques are fractalizers, and~\cite{B} showed that rainbow $3$-cliques are not fractalizers, it remains open to determine whether rainbow $k_0$-cliques are fractalizers only for $k_0\in \{4,6,7,8,9,10\}$. 
        
\end{itemize}

Our proof of Theorem~\ref{thm:main1} follows the broad framework of the proof of Theorem~\ref{thm:MR} but there are several nontrivial technical difficulties that need to be addressed in the undirected setting. The difficulties arise due to the following reason:  the role that each endpoint of an edge plays in a rainbow copy of a tournament is  determined by the color and orientation of the edge, but this is no longer true in the undirected setting. 
We overcome these obstacles by adding some new ideas, at the expense of requiring a slightly higher value of $k$. For example, our proof of Theorem~\ref{thm:main1} requires a bound on the color degree of a vertex  and this was not needed in~\cite{MR}.

For large values of $k$, we prove the following more general result which shows that the analog of Theorem~\ref{thm:main1} holds for much sparser graphs. The proof requires several major new ideas.

\begin{theorem}\label{thm:main2}
     There exists an absolute constant $C >0$ such that all connected rainbow graphs $R$ with $k$ vertices and minimum degree at least $C \log k$ are fractalizers and satisfy \[
    \mathrm{ind}(R) = \frac{k!}{k^k-k}.
    \]
\end{theorem}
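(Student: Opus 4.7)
The plan is to follow the broad strategy of Theorems \ref{thm:MR} and \ref{thm:main1}: take an edge-colored graph $G$ on $n$ vertices that asymptotically maximizes $\varrho(R, G)$, and prove it is close to a balanced blow-up of $R$. Since \eqref{eqn:fractalizer} already provides the matching lower bound $k!/(k^k-k)$, this will give both the fractalizer conclusion and the equality $\mathrm{ind}(R) = k!/(k^k-k)$. Schematically, the steps are (1) assign a \emph{role} $\pi(u) \in V(R)$ to every $u \in V(G)$, producing a partition $V(G) = V_1 \cup \cdots \cup V_k$ with $V_i = \pi^{-1}(v_i)$; (2) show the partition is balanced; (3) show the cross-edges carry the colors dictated by $R$; and (4) recurse on each $G[V_i]$.

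The main obstacle, and the reason genuinely new ideas are needed beyond Theorem \ref{thm:main1}, is step~(1). In the clique setting of Theorem~\ref{thm:main1}, each vertex of a blow-up has edges in all $k-1$ cross-class colors, giving an abundant signature that essentially determines $\pi(u)$. When $\delta(R)$ can be as small as $C \log k$, the cross-class colored signature at $u$ has only $\Theta(\log k)$ colors, and it must still be distinguishable among $k$ potential roles.

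The key new ingredient would be a \emph{signature lemma}: define $\chi_R(v) := \{\chi(vw) : vw \in E(R)\}$. Since $R$ is rainbow, any two distinct $v, v' \in V(R)$ share at most one color between their profiles (the color of $vv'$, if that edge is present), so $|\chi_R(v) \triangle \chi_R(v')| \geq 2\delta(R) - 2 \geq 2C\log k - 2$. For a suitable absolute constant $C$, a union bound over the $\binom{k}{2}$ pairs shows that even a random subset of $\chi_R(v)$ of size $\Omega(\log k)$ uniquely identifies $v$, so robust identification is possible from only a logarithmic amount of color information. Setting $w(u, v)$ to be the number of rainbow copies of $R$ in $G$ in which $u$ plays role $v$, I would combine this signature lemma with a stability argument to show that in a near-extremal $G$ the vector $(w(u, v))_v$ is essentially supported on a single coordinate $\pi(u)$. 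The mechanism is that a vertex with two genuinely plausible roles must carry two nearly disjoint large color sets in its $G$-neighborhood, inflating its color-degree well beyond $\delta(R)$ and creating local configurations from which an edge-recoloring / swap argument strictly decreases $I(R, G)$.

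Once step~(1) is established, steps (2)--(4) follow the pattern of \cite{MR}: balance of the partition comes from a convexity / migration argument applied to $\varrho(R, \cdot)$; correctness of cross-edge colors comes from an edge-swap stability analysis now guided by the known roles $\pi$; and recursion on $G[V_i]$ goes through because $G[V_i]$ asymptotically maximizes $\varrho(R, \cdot)$ on its own vertex set. The hard part will be the quantitative form of the signature lemma and the attending stability argument; the constant $C$ must be taken large enough that union-bounding over the $O(k^2)$ pair-events, the $n$ vertices, and the $\mathrm{poly}(k)$ relevant color-neighborhood events succeeds with room to spare.
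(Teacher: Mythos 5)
There is a genuine gap: your plan defers exactly the step that constitutes the content of the theorem, and the one new ingredient you do propose --- the signature lemma --- does not address the actual difficulty. Since $R$ is rainbow, the color profiles of two distinct vertices of $R$ intersect in at most one color, so any \emph{two} colors seen at a vertex of $R$ already determine it; no union bound and no $\Omega(\log k)$-sized sample is needed, and nothing in that lemma uses the hypothesis $\delta(R)\ge C\log k$. The real obstruction in the sparse setting, spelled out at the start of Section~\ref{sec:rainbow_graph}, is that when $\chi_H(xy)=\emptyset$ the pair $xy$ carries \emph{no} information about which role $y$ can play relative to $x$: the sets $N_i^j(x)$ no longer partition $N_i(x)$, and the product bound \eqref{eqn:dn1} that drives the clique proof fails. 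The paper's substitute is Lemma~\ref{lem:partition}, a breadth-first recursive partition of $N_i(x)$ along the distance layers of $R$ from $i$, which recovers $d_i(x)\le p(|N_i(x)|,k-1)$ together with the refinements (a), (c) and the two-vertex bound; this is where connectivity of $R$ is used, and connectivity is genuinely necessary (Section~\ref{sec:matching}). Your proposal contains no mechanism playing this role. Relatedly, the threshold $C\log k$ enters the paper not through signature identification but quantitatively: it forces $\alpha<\eta/4$, $\beta<1-\eta/2$ and $z<1-\eta/8$ (Claims~\ref{claim:max_color_deg2} and~\ref{claim:max_nonedge_deg2}, Lemma~\ref{lem:nbhd_z2}), whence $z^{k-2}\le k^{-\Omega(C)}$, which is exactly what beats the $\mathrm{poly}(k)$ losses in the bound \eqref{eqn:delta-2} on the bad copies $h_b$. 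Your sketch gives no reason why $\log k$, rather than a constant, is the right scale.

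Two further points. First, the paper does not prove the structural stability statement you outline: it assumes $\ind(R)=a+\gamma$ with $\gamma>0$, partitions $V(H)$ by the largest $N_i(x)$, splits the copies of $R$ as $h_m+h_g+h_b$, and derives a contradiction from the bound \eqref{eqn:bound2}, which is linear in the miscoloring density $\delta$ and so need only be checked at the two endpoints. It never establishes that the partition is balanced or that cross-edges are correctly colored, and the recursion is closed by the limit device ($\vep$, $c_0$, $n_0$ from Section~\ref{sec:setup1}) rather than by arguing that each $G[V_i]$ is itself near-extremal; your steps (2)--(4) would each require separate, and harder, stability arguments that the paper deliberately avoids. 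Second, two assertions in your sketch are off as stated: the vector $(w(u,v))_v$ is not supported essentially on one coordinate even in the iterated blow-up itself (deeper recursion levels contribute to other coordinates), and a local modification that ``strictly decreases $I(R,G)$'' is no contradiction to the maximality of $G$ --- you need one that increases it, as in the symmetrization step of Lemma~\ref{lem:min_deg2}.
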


We make the following observations regarding Theorem~\ref{thm:main2}.
\begin{itemize}
        \item Theorem~\ref{thm:main2} implies Theorem~\ref{thm:main1} for large  $k$, since $R$ may be viewed as a rainbow $k$-clique with edges deleted. Let $c_1, c_2, \dots, c_m$ be the colors assigned to the deleted edges. Any construction of a colored complete graph inducing $\ell$ rainbow copies of the rainbow $k$-clique yields at least $\ell$ rainbow copies of $R$ by deleting edges colored $c_1, \ldots, c_m$.

        \item The requirement that $R$ is connected in the statement of Theorem~\ref{thm:main2} is necessary, as disconnected rainbow graphs without isolated vertices are not fractalizers (see Section~\ref{sec:matching}).

        \item We are not able to show that our requirement on minimum degree is tight, and this remains open.
        
    \end{itemize}




Theorem $\ref{thm:main1}$ is proven in Section~\ref{sec:rainbow_clique} and Theorem $\ref{thm:main2}$ is proven in Section~\ref{sec:rainbow_graph}. In Section~\ref{sec:matching}, we justify the second observation above.

\section{Proof of Theorem $\ref{thm:main1}$}\label{sec:rainbow_clique}

We give the proof of Theorem~\ref{thm:main1} in the following subsections.

\subsection{Setup}\label{sec:setup1}

Fix $k \ge 11$ and $T = \binom{[k]}{2}$. Let $R$ be a $T$-colored rainbow $k$-clique with coloring function $\chi_R$ and for concreteness, put $V(R):=[k]$  and $\chi_R (ij) = \{i,j\}$ for all $i,j \in [k]$.  

Set 
$$a := \frac{k!}{k^k-k}.$$
Our goal is to prove that $\ind(R) \le a$. To this end, fix $\gamma>0$ and assume for contradiction $\ind(R)=a+\gamma$. Next choose $0 < \vep < \min\{\gamma,\ind(R)\}/100$.  Let $c_0$ be chosen so that  $\ind(R, n) \le \ind(R)+\varepsilon$ for all $n>c_0$. Choose $n_0 \ge \left\lceil 2k!c_0/\vep \right \rceil$ such that \begin{equation}\label{eq:nk_nk}
    \frac{n^k}{(n)_k} < 1 + \vep
\end{equation}
and
\begin{equation}\label{eq:n0}
    a \paren{\frac{n^{k - 1}}{(k - 1)!} - \binom{n - 1}{k - 1}} < \gamma \binom{n - 1}{k - 1} - {\binom{n - 2}{k - 2}}
\end{equation} 
for all $n>n_0$. This is possible since $\lim_{n \to \infty} n^k/(n)_k = 1$ and $n^{k - 1}/(k - 1)! - \binom{n - 1}{k - 1} = O(n^{k - 2})$, while $\gamma \binom{n - 1}{k - 1} - {\binom{n - 2}{k - 2}} = \Omega \paren{n^{k - 1}}$ as  $n \rightarrow \infty$.
Suppose that $n>n_0$ is given and $H$ is a $T$-colored $n$-vertex graph with coloring function $\chi_H$ achieving $I(R,n)$. This implies 
$$I(R, H) = I(R, n) = \ind(R,n){\binom nk}$$
where $a+\gamma =\ind(R) \le \ind(R,n) \le \ind(R) + \varepsilon = a+\gamma + \varepsilon$.

\begin{definition}\label{def:pq}
    For $q\ge 0$ and $t > 0$, let $p(q,t)$ be the maximum of $\prod_i q_i$ where $q_1 + \cdots + q_t = q$ and each $q_i \ge 0$ an integer.
\end{definition}

The AMGM inequality yields $p(q,t) \le (q/t)^t$ and it is easy to see that
   \begin{equation}\label{eqn:pq}
    p(q, t) p(q', t') \le p(q + q', t + t')
\end{equation} for all $q, q' \ge 0$ and $t, t'>0$ (see Appendix).

For a vertex $x$ in $V(H)$ and $i \in [k]$, write $d_i(x)$ for the number of copies of $R$ containing $x$ where $x$ plays the role of vertex $i$ in $R$.
More formally, $d_i(x)$ is the number of isomorphic embeddings $\phi: R \to H$ such that $\phi(i) = x$.
Let $d(x)=\sum_i d_i(x)$ be the number of copies of $R$ containing $x$.  We will refer to this as the \emph{degree of $x$ in $H$}. Similarly, let $d(x,y)$ be the number of copies of $R$ containing both $x$ and $y$. For $i \in [k]$, let $N_i(x)$ be the set of $y \in V(H)\setminus \{x\}$ for which there is a copy of $R$ in $H$ containing both $x$ and $y$ in which $x$ plays the role of vertex $i$ in $R$. Note that we do not have $N_j(x) \cap N_{j'}(x) =\emptyset$ for $j \ne j'$, but all edges between $N_j(x) \cap N_{j'}(x)$ and $x$ have the same color. However, $N_i(x)$ has a (unique) partition $\cup_{j \ne i} N_i^j(x)$ where $N_i^j(x)$ comprises those $y$ such that $x,y$ lie in a copy of $R$ with $x$ playing the role of $i$ and $y$ playing the role of $j$. Indeed, the partition is obtained based on the color of a vertex to $x$. This gives
\begin{equation} \label{eqn:dn1}
	d(x) = \sum_{i=1}^k d_i(x) \le \sum_{i=1}^k \prod_{j \ne i} |N_i^j(x)| \le \sum_{i=1}^k  p(|N_i(x)|, k-1).
\end{equation}
We partition $V(H)$ into $V_1 \cup \cdots \cup V_k$, where
$$V_i = \{x \in V(H):  |N_i(x)| \ge |N_j(x)| \hbox{ for all } j \ne i\}.
$$
If there is a tie, we break it arbitrarily. Set $n_i=|V_i|$ for all $i \in [k]$.

\subsection{Minimum degree}\label{sec:mindeg1}
Here we show that a standard technique in extremal graph theory can be used to prove that 
each vertex of $H$ lies in at least  the average number of copies of $R$ (apart from a small error term).

\begin{lemma}\label{lem:min_deg1}
    $d(x) \geq a n^{k-1} / (k-1)!$ for all $x \in V(H)$.
\end{lemma}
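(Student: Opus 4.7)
My plan is a twin-replacement argument in the spirit of the usual minimum-degree trick for inducibility. Suppose for contradiction some $x_0 \in V(H)$ has $d(x_0) < a n^{k-1}/(k-1)!$. For each $y \in V(H) \setminus \{x_0\}$, I would construct a $T$-colored graph $H'$ on $n$ vertices by removing $x_0$ and inserting in its place a new vertex $y'$ whose edges to every $v \neq y$ carry the color $\chi_H(yv)$, while the edge $y'y$ is given any color in $T$. Since $H$ achieves $I(R, n)$, we have $I(R, H') \le I(R, H)$.

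The key local observation is that no copy of $R$ in $H'$ uses both $y$ and $y'$. Indeed, if an embedding $\phi: R \to H'$ sent $i \mapsto y$ and $j \mapsto y'$, then choosing any $l \in V(R) \setminus \{i,j\}$ (which exists since $k \ge 3$), the edges $y\phi(l)$ and $y'\phi(l)$ would carry equal colors in $H'$ by construction, whereas in $R$ the edges $il$ and $jl$ have distinct rainbow colors $\{i,l\} \neq \{j,l\}$. Partitioning the copies of $R$ in $H'$ by their intersection with $\{y, y'\}$, we then obtain
\[ I(R, H') = (I(R, H) - d(x_0)) + (d(y) - d(x_0, y)), \]
because copies avoiding $y'$ are precisely copies of $R$ in $H - x_0$, and copies using $y'$ but not $y$ are in bijection with copies of $R$ in $H$ using $y$ but not $x_0$ via the swap $y \leftrightarrow y'$. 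Combined with $I(R, H') \le I(R, H)$, this yields the pointwise inequality $d(y) \le d(x_0) + d(x_0, y)$ for every $y \neq x_0$.

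Summing this over $y \in V(H) \setminus \{x_0\}$ and applying the double-counting identities $\sum_{y} d(y) = k \cdot I(R, H)$ and $\sum_{y \neq x_0} d(x_0, y) = (k-1) d(x_0)$, I would obtain $k \cdot I(R, H) \le (n+k-1) d(x_0)$, whence
\[ d(x_0) \;\ge\; \frac{k \, I(R, H)}{n+k-1} \;\ge\; \frac{(a+\gamma)\, n \, \binom{n-1}{k-1}}{n+k-1} \]
using $I(R, H) \ge (a+\gamma)\binom{n}{k}$. A short computation comparing this to $a n^{k-1}/(k-1)!$ using (\ref{eq:n0}) finishes the proof, contradicting the assumption on $x_0$. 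The step that requires the most care is the twin-replacement construction itself: one must recognize that the rainbow structure of $R$ prevents any copy from using both a vertex and its clone \emph{regardless} of the arbitrary color chosen for $y'y$, and this is precisely what enables the clean local exchange identity that drives the global degree bound.
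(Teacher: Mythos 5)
Your argument is correct and is essentially the paper's own proof: both use Zykov symmetrization (delete a vertex, clone another, and invoke the optimality of $H$ together with the fact that a clone pair cannot both appear in a rainbow copy, so the loss from shared copies is at most $d(x_0,y)\le\binom{n-2}{k-2}$), and the final numerical step goes through via (\ref{eq:n0}) exactly as you indicate. The only cosmetic difference is bookkeeping — you compare every vertex to the fixed low-degree vertex $x_0$ and sum, while the paper shows all degrees lie in an interval of length $\binom{n-2}{k-2}$ containing the average — but these are the same argument.
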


\begin{proof}
    We write $d=b\pm c$ for the inequalities $b-c\le d \le b+c$. Denote the average degree of $H$ by
    $$d(H) := \frac{ k\cdot I(R,H)}{n} = \ind(R,n){\binom{n-1}{k-1}}.$$ 
    We claim that for every $x \in V(H)$
    \begin{equation} \label{eqn:regular1} d(x) = d(H) \pm  {\binom{n-2}{k-2}}.
    \end{equation}
    
    This follows from a standard application of Zykov symmetrization. Indeed, if the degrees of two vertices $x$ and $y$ differ by more than ${\binom{n - 2}{k - 2}}$, say $d(x) >d(y) + {\binom{n - 2}{k - 2}}$, then we can delete $y$ and duplicate $x$, meaning we add a new vertex $x'$ with $\chi_H (x'z) = \chi_H (xz)$ for all other vertices $z$, and $\chi_H (xx')$ can be arbitrary. This transformation increases the number of copies of $R$ by at least  
    $$d(x)-d(y) -d(x,y)\ge d(x)-d(y) - {\binom{n - 2}{k - 2}} >0, $$ contradicting the maximality $I(R,H) = I(R,n)$. Hence all degrees lie in an interval of length at most ${\binom{n - 2}{k - 2}}$ and (\ref{eqn:regular1}) follows, since this interval must contain $d(H)$. 
    In particular, the minimum degree is at least 
    $$d(H) - {\binom{n - 2}{k - 2}} = \ind(R,n){\binom{n - 1}{k - 1}} - {\binom{n - 2}{k - 2}}  \ge (a+\gamma){\binom{n - 1}{k - 1}} - {\binom{n - 2}{k - 2}}> 
    a\frac{n^{k-1}}{(k-1)!}$$ 
    for $n > n_0$. The last inequality follows from (\ref{eq:n0}).
\end{proof}

\subsection{Maximum color degree}

Let $$\alpha := \frac{\max_{x,i,j} d_{\{i,j\}}(x)}{n}$$
 where the maximum is taken over all vertices $x\in V(H)$ and all colors $\{i,j\} \in T$ and $d_{\{i,j\}}(x)$ is the number of edges in $H$ incident with $x$ in color $\{i,j\}$. We upper bound this value.

\begin{lemma}\label{lem:max_color_deg1}
    $\alpha \leq 0.4$.
\end{lemma}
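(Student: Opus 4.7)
The plan is to proceed by contradiction: suppose $\alpha > 0.4$, and fix $x \in V(H)$ and a color $\{i,j\} \in T$ with $d_{\{i,j\}}(x)/n > 0.4$; write $\alpha' := d_{\{i,j\}}(x)/n$. The key structural observation is that, because $R$ is rainbow, the color $\{i,j\}$ appears on the unique edge $ij$ of $R$; hence, in any copy of $R$ through $x$, an edge at $x$ of color $\{i,j\}$ is used if and only if $x$ plays role $i$ or role $j$. Equivalently, the colors of the $k-1$ edges at $x$ in any copy of $R$ where $x$ plays role $\ell \notin \{i,j\}$ are all different from $\{i,j\}$.

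Using (\ref{eqn:dn1}) and the inequality $|N_\ell^m(x)| \le d_{\{\ell,m\}}(x)$, I would split $d(x) = \sum_\ell d_\ell(x)$ by whether $\ell \in \{i,j\}$. For $\ell \in \{i,j\}$, say $\ell = i$, the factor $|N_i^j(x)|$ is at most $\alpha' n$, while the remaining $k-2$ factors $|N_i^m(x)|$ with $m \ne i,j$ correspond to colors distinct from $\{i,j\}$, so their sum is at most $(1-\alpha')n$; AMGM then gives $d_i(x) \le \alpha' n \cdot ((1-\alpha')n/(k-2))^{k-2}$. For $\ell \notin \{i,j\}$, none of the $k-1$ colors involved equals $\{i,j\}$, their sum is again at most $(1-\alpha')n$, and AMGM yields $d_\ell(x) \le ((1-\alpha')n/(k-1))^{k-1}$. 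Summing the two contributions of the first type and the $k-2$ of the second produces
\[ d(x) \le n^{k-1} \left( 2\alpha' \left(\tfrac{1-\alpha'}{k-2}\right)^{k-2} + (k-2)\left(\tfrac{1-\alpha'}{k-1}\right)^{k-1}\right) =: n^{k-1} F(\alpha',k). \]

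Comparing this with the lower bound $d(x) \ge a n^{k-1}/(k-1)! = n^{k-1}/(k^{k-1}-1)$ from Lemma~\ref{lem:min_deg1} reduces the lemma to verifying $F(\alpha', k) < 1/(k^{k-1}-1)$ for all $\alpha' > 0.4$ and $k \ge 11$. A direct computation shows $\partial F/\partial \alpha$ is strictly negative on $[0.4, 1]$ for $k \ge 4$, since the sign is governed by $1-\alpha(k-1)$, which is negative there; so by monotonicity in $\alpha$ it suffices to check $F(0.4, k) < 1/(k^{k-1}-1)$ for every $k \ge 11$. At $k = 11$ this is a direct numerical check (both sides are of order $10^{-11}$, with $1/(k^{k-1}-1)$ roughly $3/2$ times $F(0.4, 11)$); for $k > 11$ the gap widens since $F(0.4, k)$ decays like $(0.6/(k-1))^{k-1}$, exponentially faster than $1/(k^{k-1}-1) \sim k^{-(k-1)}$. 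The main obstacle is that this threshold is essentially tight: the analogous computation fails at $k = 10$, which forces the $k \ge 11$ hypothesis of the theorem and suggests that no substantial slack is available from this two-case split alone.
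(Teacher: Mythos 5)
Your proposal is correct and follows essentially the same argument as the paper: bound $d_i(x), d_j(x)$ by $\alpha n\,((1-\alpha)n/(k-2))^{k-2}$ using the color-$\{i,j\}$ restriction, bound $d_\ell(x)$ for $\ell\notin\{i,j\}$ via the $(1-\alpha)n$ budget, and contradict the minimum-degree bound via monotonicity in $\alpha$ on $(0.4,1]$. The only difference is that you keep the sharper denominator $(k-1)^{k-1}$ in the second case, where the paper relaxes it to $(k-2)^{k-1}$ to get the closed form $(1+\alpha)((1-\alpha)/(k-2))^{k-2}$; both verifications at $k=11$ go through.
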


\begin{proof}
    Let $x, i,j$ achieve this maximum, so that $d_{\{i,j\}}(x) = \alpha n$.
    Then $|N_i^j(x)| \leq \alpha n$ and $N_i(x)$ has a partition $N_{i}^j(x) \bigcup \cup_{\ell\ne j} N_i^{\ell}(x)$ where every copy of $R$ containing $x$ with $x$ playing the role of $i$ has exactly one vertex in each $N_i^{\ell}(x)$ for all $\ell \in [k]\setminus\{i\}$. Further, $$\abs{\bigcup_{\ell\ne j} N_i^{\ell}(x)} \leq n-d_{\{i,j\}}(x)$$ since a vertex incident to an edge colored $\{i, j \}$ cannot play the role of $\ell \neq i,j$.
    Consequently,
    $$d_i(x) \le   |N_{i}^j(x)| \cdot p \paren{n- d_{\{i,j\}}(x), k-2} \le \alpha\, n \cdot \left(\frac{(1-\alpha)n}{k-2}\right)^{k-2}.$$
    The same upper bound holds for $d_j(x)$. For $\ell \not\in \{i,j\}$, we have $N_{\ell} (x) \leq n-d_{\{i,j\}}(x)$ since $x$ is playing the role of $\ell$, so we cannot include an edge incident to $x$ of color $\{i,j\}$ since the color must include $\ell$. Hence
    $$d_{\ell}(x) \le p \paren{ n-d_{\{i,j\}}(x), k-1 } \le \left(\frac{(1-\alpha)n}{k-1}\right)^{k-1}
    \le \left(\frac{(1-\alpha)n}{k-2}\right)^{k-1}.$$
    Altogether this yields
    $$ d(x) \le 2 \,\alpha\, n \left(\frac{(1-\alpha)n}{k-2}\right)^{k-2} + (k-2)\left(\frac{(1-\alpha)n}{k-2}\right)^{k-1} =(1 + \alpha)\left(\frac{1 - \alpha}{k-2}\right)^{k-2}n^{k-1}.$$
    Suppose for contradiction that $\alpha>0.4$. Since  $k \ge 3$,  $(1+\alpha)(1-\alpha)^{k-2}$ is a decreasing function of $\alpha$ for $\alpha \in (0.4, 1]$,
    and $d(x) \geq an^{k-1}/(k-1)!$ by Lemma~\ref{lem:min_deg1}. Therefore \begin{equation} \label{eqn:max1}
        \frac{1}{k^{k-1}-1}= \frac{a}{(k-1)!} \leq \frac{d(x)}{n^{k-1}} \le 1.4 \left(\frac{0.6}{k-2}\right)^{k-2}.
    \end{equation}
    However, this fails to hold for $k \ge 11$ (see Appendix), and  we conclude that $\alpha \leq 0.4$ as desired.
\end{proof}

\subsection{The second largest neighborhood}

For a vertex $x \in V(H)$, let $Z(x)$ be the second largest set 
in  $\{N_1(x), \ldots, N_k(x)\}$ and define 
$$z:= z_{k,n} = \max_{x \in V(H)} \frac{|Z(x)|
}{n}.$$

\begin{lemma}\label{lem:nbhd_z1}
    $z \leq 0.5$.
\end{lemma}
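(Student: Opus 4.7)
The plan is to argue by contradiction: suppose $z > 0.5$ and violate Lemma~\ref{lem:min_deg1}. Let $x \in V(H)$ achieve $|Z(x)|/n = z$, and after relabeling assume $|N_1(x)| \ge |N_2(x)| \ge \cdots \ge |N_k(x)|$, so $|N_2(x)| = zn$. Writing $y_i = |N_i(x)|/n$, I have $y_1 \ge y_2 = z > 0.5$ and $y_i \le z$ for $i \ge 3$.

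The key step extracts two structural bounds on the $y_i$. Since each vertex $y \ne x$ is joined to $x$ by a single edge whose color is a $2$-subset of $[k]$, and $y$ can lie in $N_i(x)$ only if $i$ belongs to that color, $y$ contributes at most $2$ to $\sum_i |N_i(x)|$; hence $\sum_i y_i \le 2(n-1)/n$. Further, if $y \in N_1(x) \cap N_2(x)$ then $\chi_H(xy)$ must simultaneously equal $\{1,j\}$ and $\{2,j'\}$ for some $j, j'$, forcing $\chi_H(xy) = \{1,2\}$; thus $|N_1(x) \cap N_2(x)| \le d_{\{1,2\}}(x) \le \alpha n$ by Lemma~\ref{lem:max_color_deg1}, and combined with $|N_1(x) \cup N_2(x)| \le n$ this yields $y_1 + z \le 1 + \alpha$. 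Together with $y_1 \ge z$ and $\alpha \le 0.4$, this forces $z \le (1+\alpha)/2 \le 0.7$.

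Applying \eqref{eqn:dn1}, $p(q, k-1) \le (q/(k-1))^{k-1}$, and $y_i^{k-1} \le z^{k-2} y_i$ for $i \ge 2$, I obtain
\[
d(x) \le \frac{n^{k-1}}{(k-1)^{k-1}} \left( y_1^{k-1} + z^{k-2}(2 - y_1) \right).
\]
Combining with the lower bound $d(x) \ge n^{k-1}/(k^{k-1}-1)$ from Lemma~\ref{lem:min_deg1} reduces matters to
\[
\frac{(k-1)^{k-1}}{k^{k-1}-1} \le y_1^{k-1} + z^{k-2}(2 - y_1).
\]
The right side is increasing in $y_1$ on $[z, 1+\alpha - z]$ (its $y_1$-derivative $(k-1)y_1^{k-2} - z^{k-2}$ is positive there), and substituting $y_1 = 1+\alpha - z$ yields an expression increasing in $\alpha \in [2z-1, 0.4]$ (since $1+\alpha - z \ge z$ keeps the $\alpha$-derivative nonnegative). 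The extremal case is thus $\alpha = 0.4$, $y_1 = 1.4 - z$, giving
\[
\frac{(k-1)^{k-1}}{k^{k-1}-1} \le (1.4 - z)^{k-1} + z^{k-2}(0.6 + z).
\]

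The main obstacle is showing this last inequality fails for all $k \ge 11$ and $z \in (0.5, 0.7]$. At the tightest case $(z, k) = (0.5, 11)$, the right side equals $0.9^{10} + 0.5^9 \cdot 1.1 \approx 0.351$, while the left equals $10^{10}/(11^{10}-1) \approx 0.386$, so the inequality already fails at the boundary. A direct calculus check (the right side attains its maximum on $[0.5, 0.7]$ at an endpoint, both endpoint values decrease in $k$, while the left remains above $1/e \approx 0.368$ for all $k$) extends the failure to the full parameter range, producing the contradiction. I note that the intersection bound from the second paragraph is essential: without it, the trivial $y_1 \le 1$ would yield only $1 + z^{k-2}$ on the right, which easily exceeds the left for all $k$, and no contradiction would be obtained.
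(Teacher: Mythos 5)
Your proof is correct and follows essentially the same route as the paper: the same two structural facts ($\sum_i |N_i(x)| \le 2n$ from the pairwise colors, and $|N_1(x)\cap N_2(x)| \le d_{\{1,2\}}(x) \le \alpha n$ giving $y_1 + z \le 1+\alpha$), the same reduction via \eqref{eqn:dn1} and Lemma~\ref{lem:min_deg1} to the inequality $(1.4-z)^{k-1} + z^{k-1} + 0.6\,z^{k-2} \ge (k-1)^{k-1}/(k^{k-1}-1)$, and the same numerical refutation on $z \in [0.5,0.7]$ at $k=11$. The only (harmless) difference is that you bound $\sum_{i\ge 2} y_i^{k-1}$ by the pointwise estimate $y_i^{k-1}\le z^{k-2}y_i$ rather than the paper's convexity/mass-shifting argument, which yields the identical bound.
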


\begin{proof}
    Let $x$ be such that $z=|Z(x)|/n$. Let $a_i=|N_i(x)|/n$ and assume by relabeling that $a_1 \ge a_2=z \ge a_3 \ge \cdots \ge a_k$.  Since $N_j(x) \cap N_{j'}(x) \cap N_{j''}(x) = \emptyset$ for any three distinct $j,j', j''$
    we have $\sum a_i \le 2$.  Let $a_3+\cdots +a_k= s\le 2-(a_1+z)$. Write $s=qz+r$ where $q \in \mathbb Z^{\ge 0}$ and $0\le r<z$. If $x \le y$, then $x^{k-1}+y^{k-1} < (x-\rho)^{k-1}+(y+\rho)^{k-1}$ for $0<\rho<x$ by convexity of $x^{k-1}$ so  successively increasing the largest $a_i$ to $z$ and decreasing the smallest $a_j$ to 0 or $r$, we obtain
    $$\sum_{i=3}^k a_i^{k-1} \le q z^{k-1} +r^{k-1}\le 
    qz^{k-1} + \frac{r}{z} z^{k-1} = \frac{s}{z}  z^{k-1} \le \frac{2-(a_1+z)}{z} z^{k-1}.$$
    Consequently, 
    $$\sum_{i=1}^k a_i^{k-1} = a_1^{k-1} + z^{k-1} + \sum_{i=3}^k a_i^{k-1} \le a_1^{k-1} + z^{k-1} + \frac{2-(a_1+z)}{z} z^{k-1}.$$
    Since $a_1 \geq z$, taking the derivative shows that for any $z$, this expression is increasing with $a_1$. 
    Using Lemma~\ref{lem:max_color_deg1}, we note that $a_1 + z \le 1+\alpha < 1.4$ since $$|N_1(x)|+|Z(x)|
    =|N_1(x) \cup Z(x)|+|N_1(x) \cap Z(x)| \le n + d_{\brac{1,2}} (x) \leq n +\alpha n< 1.4 \cdot n.$$  Thus $a_1 < 1.4-z$ and $a_1 \leq 1$, so
    \[ \sum_{i=1}^k a_i^{k-1} \leq a_1^{k-1} + z^{k-1} + \frac{2-(a_1+z)}{z} z^{k-1} \leq (\min\{1.4-z,1\})^{k-1} + z^{k-1} + \frac{0.6}{z} z^{k-1}. \]
    
    Using (\ref{eqn:dn1}) and Lemma~\ref{lem:min_deg1} yields
      $$\frac{1}{k^{k-1}-1} \leq \frac{d(x)}{n^{k-1}}  \le \sum_{i=1}^k \left(\frac{a_i}{k-1}\right)^{k-1} \le \frac{1}{(k-1)^{k-1}} \paren{(\min\{1.4-z,1\})^{k-1} + z^{k-1} + \frac{0.6}{z} z^{k-1}}.$$

    Multiplying by $(k-1)^{k-1}$ and using the fact that \begin{equation} \label{eq:e_inverse1}
        \frac{(k-1)^{k-1}}{k^{k-1}-1} \geq \frac{(k-1)^{k-1}}{k^{k-1}} = \paren{1-\frac1k}^{k-1}>\frac{1}{e}
        \end{equation}
        for $k>1$, we obtain
    $$\frac1e < (\min\{1.4-z,1\})^{k - 1} + z^{k-1} + 0.6 \cdot z^{k-2}.$$ 
    
    As $z \leq a_1$ and $z + a_1 < 1.4$, we have $z < 0.7$.
    Thus, the RHS is nonincreasing with $k$ and we may consider only the $k=11$ case.  Numerical calculations show that for $z \in [0.5,0.7]$, we have $(1.4-z)^{10} + z^{10} + 0.6z^9 < 1/e$, so we conclude that $z < 0.5$.
\end{proof}

\subsection{One large part}

We now take care of the situation when one of the $V_i$'s is very large.

\begin{lemma}\label{lem:large_V1}
    $|V_i| \le (1-1/3k)n$ for all $i \in [k]$.
\end{lemma}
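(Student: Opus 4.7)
By symmetry it suffices to prove the bound for $i=1$; my plan is to assume for contradiction that $|V_1| > (1 - 1/(3k))n$ and derive $z > 0.5$, contradicting Lemma~\ref{lem:nbhd_z1}.

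The key observation is that for every $y \in V_1$, the index $1$ achieves the maximum of $|N_1(y)|,\ldots,|N_k(y)|$, so every $|N_j(y)|$ with $j \ne 1$ is bounded by $|Z(y)| \le zn$. Combining this with the bound $d_j(y) \le p(|N_j(y)|, k-1) \le (|N_j(y)|/(k-1))^{k-1}$ implicit in (\ref{eqn:dn1}), and summing over $y \in V_1$ and $j \ne 1$, will yield
\[ \sum_{y \in V_1}\paren{d(y) - d_1(y)} \;=\; \sum_{y \in V_1}\sum_{j \ne 1} d_j(y) \;\le\; |V_1|(k-1)\paren{\frac{zn}{k-1}}^{k-1}. \]

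For a matching lower bound on the same quantity I will combine two ingredients. The near-regularity (\ref{eqn:regular1}) gives $\sum_{y \in V_1} d(y) \ge |V_1|\paren{d(H) - \binom{n-2}{k-2}}$, where $d(H) = \ind(R,n)\binom{n-1}{k-1}$. The identity $\sum_{y \in V(H)} d_1(y) = I(R,H) = (n/k)\,d(H)$, valid because each copy of $R$ has exactly one role-$1$ vertex, gives $\sum_{y \in V_1} d_1(y) \le (n/k)\,d(H)$. Subtracting and using the hypothesis $|V_1| - n/k \ge \tfrac{3k-4}{3k}\,n$ will produce
\[ \sum_{y \in V_1}\paren{d(y) - d_1(y)} \;\ge\; \frac{3k-4}{3k}\, n\, d(H) - n\binom{n-2}{k-2}. \]

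Chaining the two bounds, applying $d(H) \ge a\binom{n-1}{k-1}$ together with $a/(k-1)! = 1/(k^{k-1}-1)$, and absorbing the $\binom{n-2}{k-2}$ term into a $(1-o(1))$ factor by the choice of $n_0$ in Section~\ref{sec:setup1}, will reduce the problem to
\[ z^{k-1} \;\ge\; (1-o(1))\,\frac{3k-4}{3k}\cdot\frac{(k-1)^{k-2}}{k^{k-1}-1}. \]
A bound of type (\ref{eq:e_inverse1}) gives $(k-1)^{k-2}/(k^{k-1}-1) > 1/(e(k-1))$, and the resulting lower bound on $z^{k-1}$ exceeds $0.5^{k-1}$ for all $k \ge 11$: numerically at the borderline $k=11$ it yields $z^{10} \ge 0.034$ and hence $z \ge 0.71 > 0.5$, with the gap widening as $k$ grows. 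The main obstacle is precisely this numerical verification at $k=11$, but the comfortable slack there ($0.71$ versus $0.5$) makes it straightforward; the remaining $o(1)$ error from the asymptotic regime $n \ge n_0$ is absorbed using (\ref{eq:nk_nk}).
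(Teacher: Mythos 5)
Your proof is correct, and it takes a genuinely different accounting route from the paper's. The paper fixes the single role $j=2$ and bounds the global count $I(R,H)=\sum_{x\in V(H)}d_2(x)$ from above by $|V_1|\,p(zn,k-1)+\tfrac{n}{3k}\,p(n,k-1)$, comparing against $I(R,H)\ge a\binom nk$; the price is the additive $1/(3k)$ term coming from the at most $n/(3k)$ vertices outside $V_1$, which forces the somewhat delicate inductive verification of (\ref{eqn:large1}) in the Appendix. You instead restrict to $V_1$ but sum over all roles $j\ne 1$, and you pay for this with two extra inputs the paper's version does not use here: the near-regularity (\ref{eqn:regular1}) to lower-bound $\sum_{y\in V_1}d(y)$, and the identity $\sum_{y}d_1(y)=I(R,H)$ (valid since $R$ is rainbow, hence rigid, so each copy has exactly one role-$1$ vertex) to discard the role-$1$ contribution. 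Both arguments ultimately contradict Lemma~\ref{lem:nbhd_z1}; yours lands at the more comfortable $z\ge 0.71$ versus the threshold $0.5$, so your numerical endgame is simpler and avoids the Appendix induction, at the cost of invoking (\ref{eqn:regular1}). The individual steps check out: $d_j(y)\le p(|N_j(y)|,k-1)\le(zn/(k-1))^{k-1}$ for $y\in V_1$, $j\ne1$ is exactly (\ref{eqn:dn1}) combined with $|N_j(y)|\le|Z(y)|\le zn$; the arithmetic $|V_1|-n/k>\tfrac{3k-4}{3k}n$ is right; the error terms $\binom{n-2}{k-2}$ and the $(1+\vep)$ factor from (\ref{eq:nk_nk}) are indeed negligible against the main term of order $n^{k-1}$; and at $k=11$ one has $\tfrac{29}{33}\cdot\tfrac{10^{9}}{11^{10}-1}\approx 0.034$, far above $0.5^{10}\approx 0.001$, with the ratio to $0.5^{k-1}$ growing in $k$ as you claim.
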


\begin{proof}
    By contradiction, WLOG suppose that $|V_1|>(1-1/3k)n$. If $x \in V_1$, then $|N_1(x)|\ge |N_i(x)|$ for all $i>1$ so
    $|N_2(x)| \le |Z(x)| \le z n$.  Using (\ref{eqn:dn1}) we have 
    $$a{\binom{n}{k}} \le I(R, H) = \sum_{x \in V(H)} d_2(x) \le |V_1|p(zn, k-1)+ \frac{n}{3k}  p(n,k-1)<\left(z^{k-1}+\frac{1}{3k}\right) \frac{n^k}{(k-1)^{k-1}}.$$ 
    Using our lower bound on $n_0$ in 
    (\ref{eq:nk_nk}), we get \begin{equation}\label{eqn:large1}
        \paren{\frac{(k-1)^{k-1}}{k^k-k}} < (1+\vep) \paren{z^{k-1} + \frac{1}{3k}} .
    \end{equation} 
    This fails to hold for $k \ge 11$ (see Appendix).    
    We conclude that $|V_i| \le (1-1/3k)n$ for all $i \in [k]$. \end{proof}

\subsection{Counting the copies of $R$ in $H$}\label{sec:counting1}
Here we describe the broad framework we will use to count copies of $R$ in $H$. This is the same as in~\cite{MR}, though  there are subtle differences which arise since we are in the undirected setting.

Call a copy $f$ of $R$ in $H$ {\em transversal} if it includes exactly one vertex in $V_i$ for all $i \in [k]$. We partition the copies of $R$ in  $H$  as $H_m \cup H_g \cup H_b$ where $H_m$ comprises those copies that lie entirely inside some $V_i$, $H_g$ comprises those copies that intersect every $V_i$ whose edge coloring coincides with the natural one given by the vertex partition (meaning the map from $R$ to $H$ takes vertex $i$ to a vertex in $V_i$), and $H_b$ comprises all other copies of $R$ (these include transversal copies, but some vertex in any such copy will be in an inappropriate $V_i$). Let $h_m=|H_m|, h_g=|H_g|$ and $h_b=|H_b|$ so that
$$I(R, H) = h_m+h_g+h_b.$$
We will bound each of these three terms separately. First, note that 
\begin{equation} \label{eqn:hmbound1}
	h_m = \sum_j I(R, H[V_j]) \le \sum_j  I(R, n_j).
\end{equation}
Next we turn to $h_g$.
Let $\Delta$ denote the number of $k$-sets that intersect each $V_i$ but are not counted by $h_g$. So a $k$-set counted by $\Delta$  either does not form a copy of $R$, or forms a copy of $R$ but its edge coloring does not coincide with the natural one given by the vertex partition $V_1 \cup \ldots \cup
V_k$.
Then
\begin{equation} \label{eqn:hgbound1}
	h_g =\prod_i n_i -\Delta
\end{equation}
and we need to bound $\Delta$ from below.

Note that the color  of some pair in every member of $\Delta$ does not align with the implicit one given by our partition. With this in mind, let $D_{ij}$ be the set of pairs of vertices $\{v_i, v_j\}$ where $v_i \in V_i, v_j \in V_j$, $i\ne j$
such that $\chi_H (v_iv_j) \ne \chi_R (ij) = \{i,j\}$.
Let $\delta_{ij} =|D_{ij}|/{\binom{n}{2}}$,  $D = \cup_{ij} D_{ij}$ and
$\delta= |D|/{\binom{n}{2}}$. Let us  lower bound $\Delta$ by counting the misaligned
pairs from $D$ and then choosing the remaining $k-2$ vertices, one from each of the remaining parts $V_{\ell}$. This gives, for each $i<j$,
$$\Delta \ge |D_{ij}| \prod_{\ell \ne i,j} n_{\ell}=  \delta_{ij} {\binom{n}{2}} \prod_{\ell \ne i,j} n_{\ell}= \delta_{ij} {\binom{n}{2}} \frac{\prod_{\ell=1}^k n_{\ell}}{n_in_j}.$$
Since $\sum_{ij} \delta_{ij} {\binom{n}{2}} = \sum_{ij}|D_{ij}|=|D|=\delta{\binom{n}{2}},$
we obtain by summing over $i,j$,
$$\Delta\left(\sum_{1\le i<j\le k} n_in_j\right) \ge \delta {\binom{n}{2}} \prod_{\ell=1}^k n_{\ell}.$$ This with along with (\ref{eqn:hgbound1}) gives
\begin{equation} \label{eqn:delta+1}
	h_g \le\prod_{\ell=1}^k n_{\ell}\left(1-\frac{\delta{\binom{n}{2}}}{\sum_{1\le i<j\le k}n_in_j}\right) = \prod_{\ell=1}^k n_{\ell}\left(1-\frac{\delta{\binom{n}{2}}}{{\binom{n}{2}} - \sum_i {\binom{n_i}{2}}}\right).
\end{equation}
Our next task is to upper bound $h_b$. For a vertex $x$ and $j \in [k]$, recall that $N_j(x) \subset V(H)$ is the set of  $y$ such that $x,y$ lie in a copy of $R$ with $x$ playing the role of vertex $j$ in $R$.
Let us enumerate the set $J$ of tuples $(v,w,f)$ where $e=\{v,w\}\in D, f \in H_b$, $e \subset f$, and  $v \in V_i$, but $i \notin \chi_H (vw)$. This means that $v$ must play the role of $i'$ in $f$ for some $i' \neq i$, so the colors on all $k-1$ pairs $(v,x)$ with $x \in f$ contain 
$i'$; in particular $v$ is incident to $k-2$ pairs in $f$ whose color does not contain $i$. 
If $v\in V_i$ and $w \in V_j$, then say that $(v,w,f)$ is 1-sided if $|\chi_H (vw) \cap \{i,j\}|=1$ and $(v,w,f)$ is 2-sided if $|\chi_H (vw) \cap \{i,j\}|=0$.

Let $J_i$ be the set of $i$-sided tuples ($i=1,2$). We consider the weighted sum
$$S = 2|J_1| + |J_2|.$$
Observe that each $f \in H_b$ contains at least $k-2$ pairs from $D$. Indeed, if $f$ is transversal, then it must contain a miscolored vertex which yields at least $k-2$ pairs from $D$ in $f$. If $f$ is not transversal, then take a largest color class $\C$ of $f$ and observe that at least $|\C|-1$ of the vertices in $\C$ are miscolored. Also, note that $2 \leq |\C| \leq k-1$ since $f$ is not contained in one color class and we have assumed $f$ is not transversal.

Let $\C$ be the color class corresponding to color $j$. If exactly $|\C|-1$ vertices in $\C$ are miscolored, then every edge $vw$ where $v\in \C$ is miscolored and $w\in f \setminus \C$ is in $D$. Since $|f \setminus \C| = k - |\C|$, this yields at least $(|\C|-1)(k-|\C|) \ge k-2$ pairs from $D$ in $f$. On the other hand, if all $|\C|$ vertices in $\C$ are miscolored, then there is a unique vertex $u \in f \setminus \C$ that plays the role of $j$ in $f$. Every edge $vw$ where $v \in \C$ and $w \in f \setminus \paren{\C \cup u}$ is in $D$, so if $|\C| \leq k-2$, this yields at least $|\C| (k - |\C| - 1) \geq k-2$ pairs from $D$ in $f$. If $|\C| = k-1$, then $f = \C \cup u$ where $u$ plays vertex $j$ in $f$ but is in a different color class, say the color class corresponding to color $\ell$. There are $k-1$ edges between $\C$ and $u$, but only one can contain both $k$ and $\ell$, so at least $k-2$ edges from $D$ are in $f$.

We conclude that each $f \in H_b$  contributes at least $2(k-2)$ to $S$ since $f$ contains at least $k-2$ pairs $e=\{v,w\}\in D$
and if $(v,w,f)$ is 1-sided it contributes 2 to $S$ while if it is 2-sided then it contributes 2 again since both $(v,w,f)$
and $(w,v, f)$ are counted with coefficient 1. This yields
\begin{equation} \label{eqn:slower} S \ge 2(k-2)h_b.\end{equation}
On the other hand, we can bound $S$ from above by first choosing $e\in D$ and then $f \in H_b$ as follows. Call $v \in e =\{v,w\} \in D$ {\em correct in} $e$ if $v \in V_i$,  and $i \in \chi_H (vw)$; if $v$ is not correct in $e$ then $i \not\in \chi_H (vw)$ and say that $v$ is {\em wrong  in} $e$. The definition of $D$ implies that every $e\in D$ has at least one wrong vertex in $e$ (and possibly two wrong vertices). Let
$$D_i = \{\{v, w\}\in D: \{v, w\}\ \hbox{contains exactly $i$ wrong vertices}\} \qquad \hbox{($i=1, 2$)}.$$
The crucial observation is that
\begin{equation} \label{eqn:isided}( v,w,f) \in J_i \qquad \Longrightarrow \qquad \{v,w\} \in D_i \qquad \qquad \hbox{($i=1, 2$)}.\end{equation}

To bound $S$ from above, we use (\ref{eqn:isided}) and consider first $J_1$ and $D_1$.  We start by choosing $vw$ in $D_1$ with wrong vertex $v$. Note that $w$ is correct in $vw$ since $vw \in D_1$. Let $v \in V_i, w \in V_j$.  Then $\chi_H (vw) = \{j,\ell\}$ for some $\ell \ne i$ since $v$ is wrong in $e$ but $w$ is correct in $e$.  Thus for each triple $(v,w,f) \in J_1$, vertex $v$ plays the role of $j$ in $f$ or $v$ plays the role of $\ell$ in $f$; thus the total number of $(v,w, f) \in J_1$ for some $f$ is at most $p(|N_j(v)|-1,k-2) + p(|N_\ell(v)|-1,k-2)$.  Summing over all $vw \in D_1$, we get 
\[ |J_1| \leq \sum_{vw \in D_1} p(|N_j(v)|-1,k-2) + p(|N_\ell(v)|-1,k-2) \leq 2|D_1| p(zn,k-2). \]

The bound for  $J_2$ is similar.  Choose $vw \in D_2$ with $v \in V_i, w \in V_j$.  Let $\chi_H (vw) = \{\ell_1,\ell_2\}$ where $\{\ell_1, \ell_2\} \cap \{i,j\} = \emptyset$.  Since $vw$ is two-sided, we see that $(v,w,f)\in J_2$ exactly when $(w,v,f) \in J_2$.  Consequently,
\begin{align*}
    |J_2| & \leq \sum_{vw \in D_2} p(|N_{\ell_1}(v)|-1,k-2) + p(|N_{\ell_2}(v)|-1,k-2) \\ & \hspace{30pt} + p(|N_{\ell_1}(w)|-1,k-2) + p(|N_{\ell_2}(w)|-1,k-2) \\[.2 cm] & \leq 4 |D_2| p(zn,k-2).
\end{align*}

This gives
\begin{equation} \label{eqn:supper}
	 S = 2 |J_1| + |J_2|\le 4 \, |D| \, p(zn, k-2)\le 4 \, \delta{\binom{n}{2}}\left(\frac{z}{k-2}\right)^{k-2} n^{k-2}.\end{equation}
Finally, (\ref{eqn:slower}) and (\ref{eqn:supper}) give
    \begin{equation} \label{eqn:delta-1}h_b \le \frac{S}{2(k-2)} \leq \frac{ 2 \delta{\binom{n}{2}}}{k-2}\left(\frac{z}{k-2}\right)^{k-2}n^{k-2}.
\end{equation}
Using (\ref{eqn:hmbound1}), (\ref{eqn:delta+1}) and (\ref{eqn:delta-1})  we have that \begin{equation}\label{eqn:bound1}
    I(R, n) \leq \sum_{i} I(R, n_i) +
    \prod_{\ell} n_{\ell}\left(1-\frac{\delta{\binom{n}{2}}}{{\binom{n}{2}} - \sum_i {\binom{n_i}{2}}}\right)
    + \frac{ 2\delta{\binom{n}{2}}}{k-2}\left(\frac{z}{k-2}\right)^{k-2}n^{k-2}.
\end{equation}
Our final task is to upper bound the RHS.

Since  $\delta{\binom{n}{2}} \le \sum_{i \neq j}n_in_j = {\binom{n}{2}} - \sum_i {\binom{n_i}{2}}$, we have $\delta \in I\df \left[0, 1 - \sum_i {\binom{n_i}{2}}/{\binom{n}{2}} \right]$. Viewing (\ref{eqn:bound1}) as a linear function of $\delta$, it suffices to check the endpoints of $I$.

\subsection{The extremal case}\label{sec:extremal1}

\begin{claim}
    If $\delta = 0$, then $\ind(R) \le a$.
\end{claim}

\begin{proof}
    If $\delta=0$, then (\ref{eqn:bound1}) implies that
    \begin{equation} \label{eqn:I(R,n)case1_1}
    	I(R, n) \le \sum_{i=1}^k I(R, n_i) + \prod_{i=1}^k n_i.\end{equation}
    
    Let $p_i := n_i/n$.  Using $\max_i p_i \le 1-1/3k$ by Lemma~\ref{lem:large_V1}, convexity of $x^k$, and $k \geq 11$ we obtain
    \begin{equation} \label{eqn:psumbound1}
        \sum_{i=1}^k p_i^k \le \left(1-\frac{1}{3k}\right)^k + \left(\frac{1}{3k}\right)^k\le e^{-1/3} + 33^{-11} < 0.72.
    \end{equation}
    
    We begin by bounding the summation in (\ref{eqn:I(R,n)case1_1}).  By relabeling if necessary, let $n_1\le \cdots \le n_{\ell} \le c_0<n_{\ell+1}\le \cdots \le n_k$ where $\ell \ge 0$.  We have that
    \begin{equation} \label{eqn:pin1} \sum_{i=1}^k I(R, n_i)\leq \ell{\binom{c_0}{k}}+\sum_{i=\ell+1}^k I(R, n_i)\le
    \ell{\binom{c_0}{k}}+(\ind(R)+\varepsilon)\sum_{i=\ell+1}^k {\binom{n_i}{k}} .\end{equation}
    
     Observe that ${\binom{n_i}{k}}={\binom{p_in}{k}} < p_i^k{\binom{n}{k}}$ since $p_i<1$. Dividing (\ref{eqn:pin1}) by $\binom nk$ yields
    \begin{equation}\label{eqn:sumIbound1}
        \frac{1}{\binom nk} \sum_{i=1}^k I(R,n_i) \leq \ell\frac{\binom{c_0}{k}}{\binom{n}{k}} + (\ind(R) + \vep) \sum_{i=\ell+1}^k p_i^k.
    \end{equation}
    
    Suppose $\ell \geq 1$.  Using our bounds on $\vep$ and $n_0$ and (\ref{eqn:psumbound1}), we can further bound
    \begin{equation}\label{eqn:sumIconstant1}
        \frac{1}{\binom nk} \sum_{i=1}^k I(R,n_i) \leq \ell\frac{\binom{c_0}{k}}{\binom{n}{k}} + (\ind(R) + \vep) \sum_{i=\ell+1}^k p_i^k < 0.74\ind(R)
    \end{equation}
    and bound the product term
    \[ \frac{1}{\binom nk} \prod_{i=1}^k n_i \leq \frac{1}{\binom nk} c_0 n^{k-1} < \frac{2k!c_0}{n} < \vep. \]
    This yields $\ind(R,n) \leq 0.74\ind(R) + \vep < \ind(R)$, a contradiction.  Thus $\ell = 0$, so using (\ref{eqn:sumIbound1}) we may rewrite (\ref{eqn:I(R,n)case1_1}) as
    \begin{equation}\label{eq:i(R,n)case1_1}
        \ind(R,n) \leq (\ind(R) + \vep) \sum_{i=1}^k p_i^k + \frac{1}{\binom nk} \prod_{i=1}^k n_i.
    \end{equation}
    
    Isolating the product term and recalling the definition of $a$, as well as our lower bound on $n_0$,
    \[ \frac{1}{\binom nk} \prod_{i=1}^k n_i = \frac{n^k}{\binom nk} \prod_{i=1}^k p_i \leq (a+\vep)(k^k-k) \prod_{i=1}^k p_i. \]
    
    Plugging this into (\ref{eq:i(R,n)case1_1}) and recalling $\ind(R) = a + \gamma$,
    \[ \ind(R,n) \leq (a + \vep)\paren{\sum_{i=1}^k p_i^k + (k^k-k) \prod_{i=1}^k p_i} + \gamma \sum_{i=1}^k p_i^k \leq (a + \vep ) + 0.72\gamma. \]
    
    The first bound $\sum p_i^k + (k^k-k) \prod p_i\le 1$ is well-known (see, e.g.~(17) in~\cite{MR}) and the second bound comes from (\ref{eqn:psumbound1}).  This gives the contradiction
    \[ a + \gamma = \ind(R) \leq \ind(R,n) \leq a + 0.72\gamma + \vep  \]
    since $\vep < \gamma/100$.
\end{proof}

\subsection{The absurd case}

Now, we consider the other endpoint of $I$.

\begin{claim}
    If $\delta = 1 - \sum_i {\binom{n_i}{2}}/{\binom{n}{2}}$, then $\ind(R) \le a$.
\end{claim}

\begin{proof}
    If $\delta=1 - \sum_i {\binom{n_i}{2}}/{\binom{n}{2}}$, then (\ref{eqn:bound1}) implies that
    \begin{equation} \label{eqn:final1} I(R, n) \le \sum_{i=1}^k I(R, n_i) +\frac{2 \sum_{i \neq j} n_in_j}{k-2}\left(\frac{z}{k-2}\right)^{k-2}n^{k-2}.\end{equation}
    
    We first bound the second term.  Dividing by $\binom nk$ and again letting $p_i := n_i/n$, we reorganize
    \[ \frac{2}{\binom nk} \cdot \frac{\sum n_in_j}{k-2}\left(\frac{z}{k-2}\right)^{k-2}n^{k-2} =2 \cdot \frac{k^k-k}{(k-2)^{k-1}} \cdot \frac{n^k}{(n)_k} \cdot \paren{\sum_{i \ne j} p_i p_j} z^{k-2}a. \]
    
    Observe that $(k^{k-1} - 1)/(k - 2)^{k - 1}$ decreases to $e^2$.  In particular, for $k \geq 11$, we have $(k^k-k)/(k-2)^{k-1} \leq 7.5k$.  For $n>n_0$, we have $n^k/(n)_k < 1 + \vep$.  Finally, $\sum_{i \neq j} p_ip_j = (1-\sum p_i^2) / 2 \leq (1-1/k) / 2$  as $\sum p_i^2$ is minimized when $p_i = 1/k$ for all $i$.  Thus
    \[ \frac{2}{\binom nk} \cdot \frac{\sum n_in_j}{k-2}\left(\frac{z}{k-2}\right)^{k-2}n^{k-2} \leq 7.5(1+\vep) (k-1)z^{k-2}a < 0.25 a \] 
    for $k \geq 11$ as $(k-1)z^{k-2}$ is decreasing in $k$ and $(11-1)z^{11-2} < 10 \cdot 2^{-9} < 1/50$.  Using this and (\ref{eqn:sumIconstant1}) in (\ref{eqn:final1}), and $a<\ind(R)$ gives
    \[ \ind(R,n) \leq 0.74 \, \ind(R) + 0.25a < 0.99\, \ind(R).\]
    This  contradiction completes the proof of the claim and the theorem.
\end{proof}

\section{Proof of Theorem $\ref{thm:main2}$}\label{sec:rainbow_graph}

We give the proof of Theorem~\ref{thm:main2} in the following subsections.

\subsection{Setup}
Fix $k$ and $R = ([k],E)$ a rainbow colored graph with minimum degree at least $\eta (k-1)$ where $\eta > C \log k /(k-1)$. We may  assume that $k$ is sufficiently large by making $C$ sufficiently large so that the theorem is vacuous for small $k$. In particular, we will assume $k \geq 11$ so that we may use the same bounds as the previous section. It is notationally convenient to set $T = E \cup \{\emptyset\}$ and
view $R$ as a $T$-colored complete graph $([k], \binom{[k]}{2})$  with coloring function $\chi_R$ defined as follows:
\[ \chi_R (ij) = \begin{cases} \{i,j\} & ij \in E \\ \emptyset & ij \not\in E. \end{cases} \]

Our goal is to prove that $\ind(R) \le a$. To this end, fix $\gamma>0$ and assume for contradiction $\ind(R)=a+\gamma$. Next choose $\vep, c_0, n_0$ as in Section~\ref{sec:setup1}.

Suppose that $n>n_0$ is given and $H$ is a $T$-colored $n$-vertex graph with coloring function $\chi_H$ achieving $I(R,n)$. This implies 
$$I(R, H) = I(R, n) = \ind(R,n){\binom nk}$$
where $a+\gamma =\ind(R) \le \ind(R,n) \le \ind(R) + \varepsilon = a+\gamma + \varepsilon$.


Let $d_i (x)$, $d(x)$, $d(x,y)$, $d_{\{i, j\}} (x)$, $N_i (x)$, and $N_i^j (x)$
 be defined as in Section~\ref{sec:rainbow_clique}. Note that we do not have that all vertices in $N_j (x) \cap N_{j'}(x)$ for $j \neq j'$ have the same color to $x$ as it may be the case that $\chi_H (xy) = \emptyset$ and $\chi_H (xy') = \{j,j'\}$ for distinct $y, y' \in N_j (x) \cap N_{j'}(x)$.
We also do not have that $\cup_{j \ne i} N_i^j(x)$ is a partition of $N_i (x)$ as it may be the case that $y \in N_i^{j} (x) \cap N_i^{j'} (x)$ for some $j \ne j'$ satisfying $\chi_R (ij) = \chi_R (ij') = \emptyset$ and $y \in V(H)$ satisfying $\chi_H (xy) = \emptyset$. Thus we must develop new techniques to prove a version of (\ref{eqn:dn1}) from Section~\ref{sec:setup1} to obtain bounds on $d_i(x)$. This is the content of Section~\ref{sec:partition}.

As in Section~\ref{sec:setup1}, we partition $V(H)$ into $V_1 \cup \cdots \cup V_k$, $n_i=|V_i|$, where
$$V_i = \{x \in V(H):  |N_i(x)| \ge |N_j(x)| \hbox{ for all } j \ne i\}.
$$
If there is a tie, we break it arbitrarily.


\subsection{Partitioning argument}\label{sec:partition}

Let the {\em distance} between two vertices $v$ and $w$ in a graph $G$, denoted $\operatorname{dist}_G (v,w)$, be the number of edges in the shortest path between $v$ and $w$ in $G$. In our setting, a path cannot use an edge $e$ with $\chi(e) = \emptyset$. Then, define \[
\epsilon_G (v) := \max_{w\in V(G)} \operatorname{dist}_G (v,w),
\] the {\em eccentricity} of $v$ in $G$.  Note that the diameter $\operatorname{diam}(G) = \max_{v \in V(G)} \epsilon_G(v)$. For convenience, let $\epsilon (i) := \epsilon_R (i)$ for all $i\in [k]$.

Let $B (x)$ be the set of neighbors of $x$ in $H$. For $r \in \mathbb{N}$, let $k_r(i)$ be the number of vertices in $R$ at distance $r$ from $i$. Recall that $d_{\{i,j\}}(x)$ is the number of edges in $H$ incident with $x$ in color $\{i,j\}$

\begin{lemma}\label{lem:partition} 
    Let $i,j \in [k]$ with $\{i, j\} \in T$ and $x \in V(H)$. Then  \begin{align*}
        \text{\em (a)} ~~~ & d_i(x) \le \paren{\frac{|B (x)|}{k_1(i)}}^{k_1(i)} \paren{\frac{n - |B (x)|}{k - k_1(i) - 1}}^{k - k_1(i) - 1} \hspace{7 cm} \\
        \text{\em (b)} ~~~ & d_i(x) \le \paren{\frac{|N_i (x)|}{k-1}}^{k-1} \\
        \text{\em (c)} ~~~ & d_i(x) \le d_{\{i, j\}} (x) \cdot \paren{ \frac{n - d_{\{i, j\}} (x)}{k-2}}^{k - 2}. 
    \end{align*} Further, the number of copies of $R$ in $H$ containing vertices $x,y \in V(H)$ such that $x \in V(H)$ plays the role of vertex $i \in [k]$ in $R$ is at most \[ \paren{ \frac{|N_i (x)|}{k-2}}^{k-2}. 
    \]
\end{lemma}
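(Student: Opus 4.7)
All four bounds of Lemma~\ref{lem:partition} follow a common ``disjoint partition $+$ AM-GM'' strategy: given an embedding $\phi\colon R \to H$ with $\phi(i) = x$, we identify pairwise disjoint subsets $Q_j \subseteq V(H)$, indexed by the remaining roles $j \in [k] \setminus \{i\}$, such that $\phi(j) \in Q_j$. Then $d_i(x) \le \prod_j |Q_j|$, and AM-GM via $p(q,t) \le (q/t)^t$ (Definition~\ref{def:pq}) converts this into a power bound. The rainbow hypothesis on $R$ supplies the disjointness in all cases.

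\textbf{Neighbor and non-neighbor partitions.} Write $N_R(i) := \{\ell : i\ell \in E\}$ (so $|N_R(i)| = k_1(i)$) and $\overline{N_R(i)} := [k] \setminus (\{i\} \cup N_R(i))$. For $j \in N_R(i)$, the color $\chi_H(x\phi(j)) = \chi_R(ij) = \{i,j\}$ uniquely identifies $j$, so $B_j(x) := \{y \in B(x) : \chi_H(xy) = \{i,j\}\}$ are pairwise disjoint in $B(x)$, and AM-GM gives $\prod_{j \in N_R(i)} |B_j(x)| \le (|B(x)|/k_1(i))^{k_1(i)}$. For $j \in \overline{N_R(i)}$, $\phi(j)$ is a non-neighbor of $x$ and the edge carries color $\emptyset$, which cannot distinguish $j$. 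Here we use the BFS layers $L_0 = \{i\}, L_1 = N_R(i), L_2, \ldots, L_{\epsilon(i)}$ of $R$ from $i$, together with a residual $S_r$ initialized by $S_2 := V(H) \setminus (B(x) \cup \{x\})$ and updated iteratively by $S_{r+1} := S_r \setminus \bigsqcup_{j \in L_r} A_r^j$, where
\[
A_r^j(\phi) := \{y \in S_r : \chi_H(y\phi(j')) = \chi_R(jj') \text{ for all } j' \in L_{r-1}\}.
\]
Every $j \in L_r$ has a neighbor $j' \in L_{r-1}$ with $\chi_R(jj') = \{j,j'\}$, and this color uniquely identifies $j$ by rainbow; hence the $A_r^j$ are pairwise disjoint in $L_r$, and by the $S_r$-update they are pairwise disjoint across layers too. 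Assembling yields a disjoint partition of a subset of the $n - |B(x)|$ non-neighbors of $x$ into $c := k - k_1(i) - 1$ parts indexed by $\overline{N_R(i)}$.

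\textbf{Assembling (a), (b), (c), and the final claim.} Let $E_r(\phi|_{L_{<r}})$ be the number of extensions of $\phi|_{L_{<r}}$ to a full embedding, and set $c_r := \sum_{r' \ge r} k_{r'}(i)$. The recursion $E_r \le (\prod_{j \in L_r} |A_r^j|) \cdot \max_{\phi|_{L_r}} E_{r+1}$ together with the weighted AM-GM
\[
\prod_{j \in L_r} |A_r^j| \cdot \bigl(|S_{r+1}|/c_{r+1}\bigr)^{c_{r+1}} \le \bigl(|S_r|/c_r\bigr)^{c_r}
\]
(which uses the equality $\sum_{j \in L_r} |A_r^j| + |S_{r+1}| = |S_r|$) proves inductively that $E_r \le (|S_r|/c_r)^{c_r}$; in particular, $E_2 \le ((n-|B(x)|)/c)^c$, yielding (a) after multiplying by the neighbor factor. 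Part (b) is the same argument with ambient pool $N_i(x)$ in place of $V(H) \setminus B(x)$: every $\phi(j)$ with $j \ne i$ lies in $N_i(x)$, so the BFS partition of $N_i(x)$ into $k-1$ parts gives $(|N_i(x)|/(k-1))^{k-1}$. Part (c) uses that $\phi(j)$ (for the specified $j$) must be one of the $d_{\{i,j\}}(x)$ $\{i,j\}$-colored neighbors of $x$, and applies the BFS partition to the remaining $k-2$ roles over the $n - d_{\{i,j\}}(x)$ non-$\{i,j\}$-adjacent vertices. The final statement is the analog of (b) after fixing an extra vertex $y$ in some role: the remaining $k - 2$ vertices lie in $N_i(x)$, and partitioning into $k-2$ parts gives $(|N_i(x)|/(k-2))^{k-2}$.

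\textbf{Main obstacle.} The non-neighbor partition is the technical heart of the proof and the point at which the argument diverges from the rainbow tournament case of Theorem~\ref{thm:MR}: in that setting, orienting the single edge $xy$ immediately locates the role of a non-neighbor $y$, whereas here such a $y$ carries no local color information and may globally be compatible with several roles in $\overline{N_R(i)}$. The iterative BFS-profile argument resolves the ambiguity one layer at a time, but two subtle points require care: (i) that the disjointness $A_r^j \cap A_{r'}^{j'} = \emptyset$ indeed holds \emph{across} layers (secured by the $S_r$ residual), and (ii) that the weighted AM-GM step collapses the nested per-layer products into the single bound $(m/c)^c$ with $m = n - |B(x)|$, rather than the weaker layer-wise $\prod_r (m/k_r(i))^{k_r(i)}$ produced by naive per-layer AM-GM. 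These two issues form the technical crux of the proof.
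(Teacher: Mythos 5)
Your proof is correct and follows essentially the same strategy as the paper's: embed $R$ layer by layer along BFS distance from $i$, use the rainbow coloring of edges back to the previous layer to split each layer's candidate pool into disjoint parts, keep the layers disjoint via a shrinking residual set, and collapse the nested products with the superadditivity of $p(q,t)$ (your weighted AM-GM step is exactly the real-valued form of the paper's inequality (\ref{eqn:pq})). The handling of (a), (b), (c) and the final statement likewise matches the paper's proof.
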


\begin{proof}
    
    We start by proving the three upper bounds on $d_i (x)$. To count the number of copies of $R$ in $H$ where $x$ plays the role of $i$, we will recursively partition $|N_i (x)|$. First, we pick $k_1 (i)$ vertices from $B(x) \cap N_i (x) \subset V(H)$ to play the role of the vertices adjacent to $i$ in $R$. Notice that we may partition $B(x) \cap N_i (x)$ into $k_1 (i)$ parts based on the color of each vertex to $x$ as it uniquely determines its possible role in a copy of $R$.  Set $B_1 := B(x) \cap N_i (x)$. We now recursively define $B_r$ for all $r \in [\epsilon (i)]$. Let $2\le r \le \epsilon (i)$, let $m := k_1 (i) + \cdots + k_{r-1} (i)$, and suppose that we have chosen $y_1, y_2, \ldots, y_m \in V(H)$ to play the roles of all vertices at distance $r-1$ or less from $i$ in $R$. Then \begin{align*}
        B_r := & \, B_r (x, B_1, B_2, \ldots, B_{r-1}, y_1,\ldots, y_{k_{r-1} (i)}) \\
        = & \,  N_i (x) \cap \paren{B (y_1) \cup \cdots \cup B (y_{k_{r-1} (i)})} \setminus \paren{x \cup B_1 \cup \cdots \cup B_{r-1}}.
    \end{align*} Here, $B_r$ is the set of  vertices in $H$ that can play the role of vertices at distance $r$ from $i$ in $R$, given that we have already selected all vertices at distance at most $r-1$ from $i$.
    
    Note that by definition,  $B_r \cap B_\ell = \emptyset$ for all $r, \ell \in [\epsilon (i)]$ and $\bigcup B_r \subseteq N_i (x)$. For the remainder of the proof, we write $k_r := k_r (i)$ for all $r \in [\epsilon (i)]$ for convenience.
    
    Each vertex $v \in B_r$ has an edge to at least one of $y_1, \ldots, y_{k_{r-1}}$. The color of this edge uniquely determines the role that $v$ may play in a copy of $R$, so this allows us to uniquely partition $B_{r}$ into $k_r$ parts. We note that it may be the case that $v$ cannot legally play any role, but that only decreases the number of possible copies of $R$, so we may assume that this does not occur. Let $P_r := P (B_r, k_r)$ be the set of tuples $\vec{y} \in B_r^{k_r}$ with one vertex from each part of $B_r$,
     so
     $$|P_r| \le p(|B_r|, k_r).$$This gives 
$$        d_i (x) \le  \sum_{\vec{y}_1 \in P_1}\cdots \sum_{\vec{y}_{\epsilon (i) - 1} \in P_{\epsilon (i) - 1}} p(|B_{\epsilon (i)}|, k_{\epsilon (i)}) 
        \le \sum_{\vec{y}_1 \in P_1}\cdots \sum_{\vec{y}_{\epsilon (i) - 1} \in P_{\epsilon (i) - 1}} p\paren{|N_i (x)| - \sum_{r=1}^{\epsilon (i) - 1} |B_r|, k_{\epsilon (i)}}.$$ Using (\ref{eqn:pq}) from Section~\ref{sec:setup1} we see that
        \begin{align*}
          \sum_{\vec{y}_{\epsilon (i) - 1} \in P_{\epsilon (i) - 1}} ~ p\paren{|N_i (x)| - \sum_{r=1}^{\epsilon (i) - 1} |B_r|, k_{\epsilon (i)}} 
           &= p \paren{|B_{\epsilon (i) - 1}|, k_{\epsilon (i) - 1}} \cdot p\paren{|N_i (x)| - \sum_{r=1}^{\epsilon (i) - 1} |B_r| , k_{\epsilon (i)}} \\
           &=p\paren{|N_i (x)| - \sum_{r=1}^{\epsilon (i) - 2} |B_r| , k_{\epsilon (i) - 1} (i) + k_{\epsilon (i)}}. 
        \end{align*}
        Using $\sum_{r = 1}^{\epsilon (i)} k_r = k-1$, we obtain
\begin{align*}d_i(x) &\le \sum_{\vec{y}_1 \in P_1}\cdots \sum_{\vec{y}_{\epsilon (i) - 2} \in P_{\epsilon (i) - 2}} ~ p\paren{|N_i (x)| - \sum_{r=1}^{\epsilon (i) - 2} |B_r| , k_{\epsilon (i) - 1} + k_{\epsilon (i)}}\\
&= \sum_{\vec{y}_1 \in P_1}\cdots \sum_{\vec{y}_{\epsilon (i) - 2} \in P_{\epsilon (i) - 2}} ~ p\paren{|N_i (x)| - \sum_{r=1}^{\epsilon (i) - 2} |B_r| , (k-1)-\sum_{r=1}^{\epsilon(i)-2} k_r}.
\end{align*}
        
        Continuing this process, we obtain, for each $1 \le \ell \le \epsilon(i)-1$,
  $$  d_i(x) 
  \le \sum_{\vec{y}_1 \in P_1}\cdots \sum_{\vec{y}_{\ell} \in P_\ell} p\paren{|N_i (x)| - \sum_{r=1}^{\ell} |B_r| , (k-1) - \sum_{r=1}^{\ell} k_r}. $$
  When $\ell=1$ this becomes
  \begin{align*}
        d_i (x)
        &\le \sum_{\vec{y}_1 \in P_1} p\paren{|N_i (x)| - |B_1|, k -1- k_1} \addtocounter{equation}{1}\tag{\theequation} \label{eqn:partition_P1} \\
        &= p(|B (x)|, k_1) \cdot p\paren{|N_i (x)| - |B (x)|, k - k_1 - 1}.
    \end{align*} As $|N_i (x)| \le n-1 < n$ for all $i \in [k]$,  \[
    p(|B (x)|, k_1) \cdot p\paren{|N_i (x)| - |B (x)|, k - k_1 - 1} \le \paren{\frac{|B (x)|}{k_1}}^{k_1} \paren{\frac{n - |B (x)|}{k - k_1 - 1}}^{k - k_1 - 1},
    \] so  (a) holds. Alternatively, (\ref{eqn:pq}) also yields \[
    p(|B (x)|, k_1) \cdot p\paren{|N_i (x)| - |B (x)|, k - k_1 - 1} \leq p(|N_i(x)|, k-1) \leq \paren{\frac{|N_i (x)|}{k-1}}^{k-1}, \]
     so (b) holds.  

    For (c), let $j \in [k]$ such that $ij \in E$. We bound $d_i (x)$ as before, but we choose the vertex $y$ that plays role $j$ separately. We see that \[
    |P_1| \le d_{\{i, j\}} (x) \cdot p (|B_1| - d_{\{i,j\}}(x), k_1 -1).
    \] This combined with (\ref{eqn:partition_P1}) and (\ref{eqn:pq}) gives \begin{align*}
        d_i (x) &\le \sum_{\vec{y}_1 \in P_1} p\paren{|N_i (x)| - |B_1|, k - k_1 - 1} \\
        &\le d_{\{i, j\}} (x) \cdot p (|B_1| - d_{\{i, j\}} (x), k_1 -1) \cdot p\paren{|N_i (x)| - |B_1|, k - k_1 - 1} \\
        &\le d_{\{i, j\}} (x) \cdot p (n - d_{\{i, j\}} (x), k -2) \\
        &\le d_{\{i, j\}} (x) \cdot \paren{ \frac{n - d_{\{i, j\}} (x)}{k-2}}^{k - 2}. 
    \end{align*}

    It remains to prove the last sentence of the lemma. We proceed as before except that, for $\ell \in [\epsilon (i)]$ such that $y \in B_\ell$, we require that $y$ is chosen. This means that instead of choosing $k_{\ell} (i)$ vertices from $B_{\ell}$, we only need to choose $k_{\ell} (i)-1$ vertices from $B_{\ell}$ as we have already chosen $y$. Following the same procedure as  before, we see that \begin{align*}
        d_i (x)& \le 
         \sum_{\vec{y}_1 \in P_1}\cdots \sum_{\vec{y}_{\ell} \in P_{\ell}} p \paren{|B_{\ell}|, k_{\ell} - 1} \cdot p\paren{|N_i (x)| - \sum_{r=1}^{\ell + 1} |B_r| , (k-1) - \sum_{r=1}^{\ell + 1} k_r} \\
        &\le \sum_{\vec{y}_1 \in P_1}\cdots \sum_{\vec{y}_{\ell} \in P_\ell} p\paren{|N_i (x)| - \sum_{r=1}^{\ell} |B_r| , (k-2) - \sum_{r=1}^{\ell} k_r} \\
        &\vdots \\
        &\le \sum_{\vec{y}_1 \in P_1} p\paren{|N_i (x)| - |B_1|, k - 2-k_1} \\
        &= p(|B (x)|, k_1 \cdot p\paren{|N_i (x)| - |B (x)|, k - 2-k_1} \\
        &\le p(|N_i (x)|, k - 2) \\
        &\le \paren{ \frac{|N_i (x)|}{k-2}}^{k-2}.
    \end{align*} This completes the proof.
\end{proof}


\subsection{Minimum degree}

As in Section~\ref{sec:mindeg1}, we wish to show that each vertex of $H$ lies in approximately the average number of copies of $R$.

\begin{lemma}\label{lem:min_deg2}
    $d(x) \geq a n^{k-1} / (k-1)!$ for all $x \in V(H)$.
\end{lemma}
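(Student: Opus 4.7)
The plan is to repeat, almost verbatim, the Zykov symmetrization argument from Lemma~\ref{lem:min_deg1}. Even though we are now in the broader setting of $T$-colored complete graphs with $\emptyset \in T$ representing missing edges of $R$, the symmetrization move preserves every structural feature we need, so the same proof should go through with essentially no change.

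I will first define the average degree $d(H) := k \cdot I(R,H)/n = \ind(R,n)\binom{n-1}{k-1}$, and then show that
\[
d(x) = d(H) \pm \binom{n-2}{k-2}
\]
for every $x \in V(H)$. Assume for contradiction there exist $x,y \in V(H)$ with $d(x) > d(y) + \binom{n-2}{k-2}$. Form $H'$ by deleting $y$ and duplicating $x$, i.e.\ adding a new vertex $x'$ with $\chi_{H'}(x'z) = \chi_H(xz)$ for every $z \in V(H)\setminus\{x,y\}$ and with $\chi_{H'}(xx')$ arbitrary. Then $H'$ is again a $T$-colored complete graph on $n$ vertices, so it competes with $H$ for $I(R,n)$. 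Every copy of $R$ in $H$ containing $x$ but not $y$ yields a corresponding copy of $R$ in $H'$ using $x'$ in place of $x$, producing $d(x) - d(x,y)$ new copies; we lose at most $d(y)$ copies, and may gain further bonus copies involving both $x$ and $x'$. Using $d(x,y) \le \binom{n-2}{k-2}$ (any copy of $R$ through $x,y$ is determined by its remaining $k-2$ vertices), we obtain
\[
I(R,H') \ge I(R,H) + d(x) - d(y) - \binom{n-2}{k-2} > I(R,H),
\]
contradicting $I(R,H) = I(R,n)$.

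Consequently, all degrees lie in an interval of length at most $\binom{n-2}{k-2}$, which must contain the average $d(H)$. Therefore
\[
\min_{x \in V(H)} d(x) \ge d(H) - \binom{n-2}{k-2} = \ind(R,n)\binom{n-1}{k-1} - \binom{n-2}{k-2} \ge (a+\gamma)\binom{n-1}{k-1} - \binom{n-2}{k-2},
\]
and the choice of $n_0$ in (\ref{eq:n0}) yields $d(x) \ge a n^{k-1}/(k-1)!$ for $n > n_0$.

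I do not anticipate any real obstacle here: the only thing to verify is that the duplication step is legitimate in the $T$-colored setting, which it is because $T$-colored complete graphs are closed under such operations and we are free to assign $\chi_{H'}(xx')$ any color in $T$ (including $\emptyset$). The bound $d(x,y) \le \binom{n-2}{k-2}$ is immediate from the definition of $I(R,G)$ as a count of $k$-subsets inducing $R$, so no new ideas beyond Section~\ref{sec:mindeg1} are required.
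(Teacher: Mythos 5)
Your proposal is correct and is exactly the argument the paper intends: the paper's proof of Lemma~\ref{lem:min_deg2} simply states that it follows from the identical Zykov symmetrization argument of Lemma~\ref{lem:min_deg1}, which is what you have reproduced, including the observation that duplication is legitimate for $T$-colored complete graphs and the final appeal to the choice of $n_0$ in (\ref{eq:n0}).
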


This follows from an identical Zykov symmetrization argument as used in the proof of Lemma~\ref{lem:min_deg1}.  Note that we have assumed the same inequalities for $n_0$ as we did in Section~\ref{sec:setup1}.

\subsection{Maximum color and non-edge degrees}

The following two claims are used in the proof of Lemma~\ref{lem:nbhd_z2} to bound the size of the second largest neighborhood.

Let $$\alpha := \frac{\max_{x,i,j} d_{\{i,j\}}(x)}{n}$$ where the maximum is taken over all vertices $x \in V(H)$ and all colors $\{i,j\} \in T$. We upper bound this value.

\begin{claim}\label{claim:max_color_deg2}
    $\alpha < \eta/4$.
\end{claim}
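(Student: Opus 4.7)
The plan is to mirror the proof of Lemma \ref{lem:max_color_deg1} while using the new bounds of Lemma \ref{lem:partition} to compensate for the fact that, in the rainbow graph setting, the color of an edge $xy$ in $H$ no longer uniquely determines the role of $y$ in a copy of $R$. This yields a universal upper bound $\alpha \le (1+o(1))\log k/(k-1)$, and the hypothesis $\eta > C\log k/(k-1)$ then forces $\alpha < \eta/4$ provided $C$ is chosen as a sufficiently large absolute constant.

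In detail, I would fix $x \in V(H)$ and a color $\{i,j\} \in E$ with $d_{\{i,j\}}(x) = \alpha n$. Since $ij \in E$, Lemma \ref{lem:partition}(c) yields
\[
d_i(x),\; d_j(x) \le \alpha n\left(\frac{(1-\alpha)n}{k-2}\right)^{k-2}.
\]
For any $\ell \in [k] \setminus \{i, j\}$, each edge at $x$ in a copy of $R$ where $x$ plays the role of $\ell$ has color $\chi_R(\ell m) \in \{\{\ell, m\}, \emptyset\}$ for some role $m$, and this color is not $\{i,j\}$ because $\ell \notin \{i,j\}$. Hence $N_\ell(x)$ is disjoint from the $\{i,j\}$-colored neighbors of $x$, giving $|N_\ell(x)| \le (1-\alpha)n$ and, via Lemma \ref{lem:partition}(b), $d_\ell(x) \le ((1-\alpha)n/(k-1))^{k-1}$. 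Summing over $\ell$ and bounding $(k-1)^{-(k-1)} \le (k-2)^{-(k-1)}$ produces
\[
d(x) \le (1+\alpha)\left(\frac{1-\alpha}{k-2}\right)^{k-2} n^{k-1}.
\]

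Combining this with $d(x) \ge an^{k-1}/(k-1)!$ from Lemma \ref{lem:min_deg2} and rearranging gives
\[
(1+\alpha)(1-\alpha)^{k-2} \ge \frac{(k-2)^{k-2}}{k^{k-1}-1} \ge \frac{(1-2/k)^{k-2}}{k} \ge \frac{e^{-2}}{2k}
\]
for $k$ sufficiently large. Taking logarithms and using $-\log(1-\alpha) \ge \alpha$ together with $\log(1+\alpha) \le \alpha$, I deduce $(k-3)\alpha \le \log k + O(1)$, hence $\alpha \le (1+o(1))\log k/(k-1)$. Since $\eta > C\log k/(k-1)$, choosing $C$ to be a sufficiently large absolute constant (any fixed $C$ strictly greater than $4$ suffices for $k$ large) forces $\eta/4 > \alpha$, as required. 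The main obstacle is this quantitative calibration of the absolute constant $C$ against the leading constant in the universal bound on $\alpha$; the argument also tacitly assumes $\{i,j\} \in E$, so the analogous bound for the non-edge color $\emptyset$ will need a separate argument elsewhere in the section.
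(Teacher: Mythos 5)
Your proof is correct and follows essentially the same route as the paper's: both use Lemma~\ref{lem:partition}(c) for $d_i(x),d_j(x)$, the observation $|N_\ell(x)|\le(1-\alpha)n$ with Lemma~\ref{lem:partition}(b) for $\ell\ne i,j$, and the minimum-degree bound of Lemma~\ref{lem:min_deg2} to force $(1-\alpha)^{k-2}\ge 1/\mathrm{poly}(k)$, hence $\alpha=O(\log k/k)<\eta/4$ for large $C$. The only differences are cosmetic: you derive the bound on $\alpha$ directly (keeping the $(1+\alpha)$ factor and getting $C>4$) whereas the paper argues by contradiction with slightly cruder estimates (getting $C>10$), and your closing remark that $\emptyset$ is handled separately matches the paper's treatment via $\beta$ in the next claim.
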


\begin{proof}
    Let $x$ achieve this maximum, so that $d_{\{i,j\}}(x) = \alpha n$ for some $\{i,j\} \in T$.
    By Lemma~\ref{lem:partition}(c) and $\alpha \leq 1$, we get
    \begin{align*}
        \max\{d_i(x), d_j(x)\} \leq \alpha n \paren{\frac{(1-\alpha)n}{k-2}}^{k-2} \leq n^{k-1} \paren{\frac{1-\alpha}{k-2}}^{k-2}.
    \end{align*}

    For any other vertex $\ell \ne i,j$, we have $|N_\ell(x)| \leq (1-\alpha)n$, since $\ell$ is adjacent to no edges of color $\{i,j\}$ in $R$.  Thus by Lemma~\ref{lem:partition}(b), we get
    \[ d_\ell(x) \leq \paren{\frac{|N_\ell (x)|}{k-1}}^{k-1} \leq \paren{\frac{(1-\alpha)n}{k-1}}^{k-1} \leq n^{k-1} \paren{\frac{1-\alpha}{k-2}}^{k-2}. \]
    The last inequality comes as decreasing the denominator increases the fraction, and the base is less than 1, so decreasing the exponent increases the result.  Summing over all indices in $[k]$ and using Lemma~\ref{lem:min_deg2}, we get
    \begin{equation*}
         \frac{1}{k^{k-1}-1} = \frac{a}{(k-1)!} \leq \frac{d(x)}{n^{k-1}} \leq k \paren{\frac{1-\alpha}{k-2}}^{k-2}.
    \end{equation*} Rearranging yields \[
    \frac{1}{k} \cdot \frac{(k-2)^{k-2}}{k^{k-1}-1} \le (1-\alpha)^{k-2}.
    \]

    We see that
    \[ \frac{(k-2)^{k-2}}{k^{k-1}-1} \geq \frac{(k-2)^{k-2}}{k^{k-1}} = \frac1k \paren{1-\frac2k}^{k-2} > \frac1{e^2k}, \]
    so
    \[ \frac{1}{e^2k^2} < (1-\alpha)^{k-2} \leq \exp(-(k-2)\alpha). \]

    Assume for contradiction that $\alpha \geq \eta/4 > C\log k/(4(k-1))$.  Then \[
    \frac{1}{e^2k^2} < k^{-C(1-1/(k-1))/4} < k^{-0.9C/4},
    \]  since $k \geq 11$.  For $C > 10$, this gives a contradiction for sufficiently large $k$.
\end{proof}

Let $$\beta := \frac{\max_{x} d_\emptyset(x)}{n}$$ where the maximum is taken over all vertices $x \in V(H)$ and $d_\emptyset(x)$ is the number of edges in $H$ incident with $x$ in color $\emptyset$ (non-edges). Note that we may assume that $R$ has at least one non-edge, since otherwise the proof from Section~\ref{sec:rainbow_clique} suffices. Thus we may also assume that $H$ has at least one non-edge, so $\beta >0$. We upper bound $\beta$.

\begin{claim}\label{claim:max_nonedge_deg2}
    $\beta < 1-\eta/2$.
\end{claim}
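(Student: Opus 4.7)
The plan is to assume $\beta \ge 1 - \eta/2$ at some vertex $x$ and derive a contradiction with Lemma~\ref{lem:min_deg2}. Under this assumption $|B(x)| = n-1 - d_\emptyset(x) \le (\eta/2)n$, so $x$ has very few neighbors in $H$. Intuitively, every copy of $R$ through $x$ requires $k_1(i) \ge \eta(k-1)$ neighbors of $x$ to play the roles adjacent to $x$'s role, so such a sparse neighborhood should force $d(x)$ to fall short of the degree lower bound.

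To make this rigorous I would invoke Lemma~\ref{lem:partition}(a) for each $i \in [k]$. Since $|B(x)| \le (\eta/2)n \le n k_1(i)/(k-1)$ (using $k_1(i) \ge \eta(k-1)$), the function $b \mapsto b^{k_1(i)}(n-b)^{k-k_1(i)-1}$ is increasing in $b$ on our range, so substituting $|B(x)| = (\eta/2)n$ yields a valid upper bound. Setting $p_i := k_1(i)/(k-1) \in [\eta, 1]$ and simplifying algebraically, the bound rearranges as
\[
d_i(x) \le \paren{\frac{n}{k-1}}^{k-1} \exp\!\paren{-(k-1)\, D(p_i \| \eta/2)},
\]
where $D(p \| q) = p \log(p/q) + (1-p) \log((1-p)/(1-q))$ is the binary Kullback--Leibler divergence.

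The next step uses the monotonicity of $D(\cdot \| \eta/2)$ on $[\eta/2, 1]$: since $p_i \ge \eta > \eta/2$ we have $D(p_i \| \eta/2) \ge D(\eta \| \eta/2)$. A short calculation applying $\log(1+u) \le u$ at $u = \eta/(2(1-\eta))$ gives the linear lower bound $D(\eta \| \eta/2) \ge (\log 2 - \tfrac{1}{2})\eta$ for all $\eta \in (0,1)$. Combining with $\eta(k-1) \ge C \log k$ and summing over the $k$ roles,
\[
d(x) \le k \paren{\frac{n}{k-1}}^{k-1} k^{-(\log 2 - 1/2)C}.
\]

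Finally, I would compare with the lower bound $d(x) \ge a n^{k-1}/(k-1)! \ge n^{k-1}/k^{k-1}$ from Lemma~\ref{lem:min_deg2}. Using $(k/(k-1))^{k-1} < e$, the desired contradiction reduces to $k^{1-(\log 2 - 1/2)C} < 1/e$, which is uniform in $k \ge 11$ provided the absolute constant $C$ is chosen large enough (e.g., $C \ge 10$). The main technical obstacle will be verifying the linear lower bound on the KL divergence and handling the boundary case $p_i = 1$ (i.e., $k_1(i) = k-1$), where the $\exp$-form gives $D(1 \| \eta/2) = \log(2/\eta)$ via the convention $0 \log 0 = 0$; the remaining steps are routine algebra.
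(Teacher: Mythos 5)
Your proposal is correct and follows essentially the same route as the paper: both start from Lemma~\ref{lem:partition}(a), reduce to the two-parameter expression in $\beta$ (equivalently $|B(x)|$) and $q = k_1(i)/(k-1)$, use monotonicity in each variable to evaluate at $\beta = 1-\eta/2$ and $q=\eta$, and arrive at exactly the bound $d_i(x) \le (n/(k-1))^{k-1}\exp(-(\log 2 - \tfrac12)(k-1)\eta)$ before comparing with Lemma~\ref{lem:min_deg2}. Your Kullback--Leibler phrasing is just a clean repackaging of the paper's direct calculus (the paper's quantity $2^{-\eta}(1+\eta/(2(1-\eta)))^{1-\eta}$ is precisely $\exp(-D(\eta\,\|\,\eta/2))$), and your handling of the $k_1(i)=k-1$ boundary case matches the paper's $0^0=1$ convention.
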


\begin{proof}
    Assume for contradiction that $\beta \geq 1 - \eta/2$. Let $x$ achieve this maximum so that $d_\emptyset (x) = \beta n$. This implies that $B(x) = (1 - \beta) n$. For any $i \in V(R)$, Lemma~\ref{lem:partition}(a) gives 
    \begin{align*}
        d_i (x) &\le \paren{\frac{B (x)}{k_1(i)}}^{k_1(i)} \paren{\frac{n - B (x)}{k - k_1(i) - 1}}^{k - k_1(i) - 1} \\
        &= \paren{\frac{(1- \beta) n}{k_1(i)}}^{k_1(i)} \paren{\frac{\beta n}{k - k_1(i) - 1}}^{k - k_1(i) - 1} \\
        &= n^{k-1} \frac1{k_1 (i)^{k_1 (i)}} (1-\beta)^{k_1 (i)}\beta^{k- k_1 (i) -1} \paren{\frac{1}{k-1-k_1 (i)}}^{k-1-k_1 (i)}.
    \end{align*}  
    For this section, we take the convention $0^0 = 1$ to handle the case that $k_1(i) = k-1$.
    
    Let $q = k_1(i)/(k-1) \in (0,1]$.  Then
    \begin{equation}\label{eqn:beta_q_final}
        d_i (x) \le \paren{\frac{n}{k-1}}^{k-1}  \paren{\frac{(1-\beta)^q\beta^{1-q}}{q^q(1-q)^{1-q}}}^{k-1}.
    \end{equation}

    We will first bound the term
    \begin{equation}\label{eqn:beta_q_1}
        \frac{(1-\beta)^q \beta^{1-q}}{q^q (1-q)^{1-q}}.
    \end{equation}

    Regarding (\ref{eqn:beta_q_1}) as a function of $\beta$, we see that the derivative
    \[ \frac{\partial}{\partial \beta}\paren{\frac{(1-\beta)^q \beta^{1-q}}{q^q (1-q)^{1-q}}} = \frac{(1-\beta)^{q-1}\beta^{-q}}{q^q(1-q)^{1-q}}((1-q)-\beta) \]
    is negative for $\beta > 1-q$ since the fraction is nonnegative.  Recall that $k_1(i) = \deg_R(i) \geq \eta (k-1)$ by assumption, so $q > \eta$.  We have also assumed for contradiction that $\beta \geq 1-\eta/2 > 1-\eta > 1-q$. Thus decreasing $\beta$ to $1 - \eta/2$ will only increase (\ref{eqn:beta_q_1}), i.e.
    \begin{equation}\label{eqn:beta_q_2}
        \frac{(1-\beta)^q \beta^{1-q}}{q^q (1-q)^{1-q}} \leq \frac{(\eta/2)^q (1-\eta/2)^{1-q}}{q^q (1-q)^{1-q}}.
    \end{equation}

    We now have a function purely of $q$.  Taking the derivative, we get \[ \frac{\partial}{\partial q} \paren {\frac{(\eta/2)^q (1-\eta/2)^{1-q}}{q^q (1-q)^{1-q}}} = \frac{1}{2} (1 - q)^{q - 1} q^{-q} (2 - \eta)^{1-q} \eta^q \log \paren{\frac{\eta (1-q)}{q (2 - \eta)}} \] where all terms are positive except the logarithm, which is negative for $q > \eta/2$. Thus (\ref{eqn:beta_q_2}) is decreasing with $q$ for $q > \eta/2$, so we may take the further upper bound 
    \begin{equation}
        \frac{(1-\beta)^q \beta^{1-q}}{q^q (1-q)^{1-q}} \leq \frac{(\eta/2)^\eta (1-\eta/2)^{1-\eta}}{\eta^\eta (1-\eta)^{1-\eta}} = 2^{-\eta} \paren{1+\frac{\eta}{2(1-\eta)}}^{1-\eta} \leq \exp(-(\log 2 - 1/2)\eta).
    \end{equation}

    We now have an appropriate upper bound.  Substituting into (\ref{eqn:beta_q_final}) and recalling that $\eta > C \log k / k$, we see that
    \[ d_i(x) \leq \paren{\frac{n}{k-1}}^{k-1} \exp(-(\log 2 - 1/2)(k-1)\eta) < \paren{\frac{n}{k-1}}^{k-1} k^{-C(\log 2-1/2)}. \]

    Using Lemma~\ref{lem:min_deg2}, we get 
    \[ \frac{1}{k^{k-1}-1} = \frac{a}{(k-1)!} \leq \frac{d(x)}{n^{k-1}} \leq k \paren{\frac{1}{k-1}}^{k-1} k^{-C(\log 2-1/2)}. \]

    Rearranging terms and using the standard inequality (\ref{eq:e_inverse1}) yields
    \[ \frac{1}{ek} \leq k^{-C(\log 2-1/2)}. \]

    For $C > 1/(\log 2-1/2) \approx 5.18$, this yields a contradiction for sufficiently large $k$.
\end{proof}

\subsection{The second largest neighborhood}

For a vertex $x \in V(H)$, let $Z(x)$ be the second largest set 
in  $\{N_1(x), \ldots, N_k(x)\}$ and define 
$$z:= z_{k,n} = \max_{x \in V(H)} \frac{|Z(x)|
}{n}.$$

\begin{lemma}\label{lem:nbhd_z2}
    $z < 1-\eta/8$.
\end{lemma}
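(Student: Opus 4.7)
The plan is to derive a contradiction using Claims~\ref{claim:max_color_deg2} and~\ref{claim:max_nonedge_deg2} in tandem. Suppose for contradiction that some vertex $x \in V(H)$ satisfies $|Z(x)|/n \geq 1 - \eta/8$. Since $Z(x)$ is the second largest set among $\{N_i(x)\}_{i=1}^{k}$, at least two such sets have size at least $(1-\eta/8)n$; by relabeling the vertices of $R$ I may take these to be $N_1(x)$ and $N_2(x)$. Standard inclusion-exclusion inside $V(H)\setminus\{x\}$ then gives
\[ |N_1(x) \cap N_2(x)| \;\geq\; 2(1-\eta/8)n - (n-1) \;\geq\; (1-\eta/4)n. \]

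Next I will classify each $y$ in this intersection by the color $\chi_H(xy)$. Because $R$ is rainbow, a witness copy $\phi$ of $R$ mapping $x$ to $i$ must color the edge $xy$ either by $\chi_R(i,\phi^{-1}(y))=\{i,\phi^{-1}(y)\}$ (when $i\phi^{-1}(y) \in E(R)$) or by $\emptyset$ (when $i\phi^{-1}(y) \notin E(R)$). Imposing this simultaneously for $i=1$ and $i=2$ forces $\chi_H(xy) = \emptyset$ or $\chi_H(xy) = \{1,2\}$; the latter can occur only if $12 \in E(R)$, and if $12 \notin E(R)$ the whole intersection consists of non-neighbors of $x$, which only strengthens what follows.

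By Claim~\ref{claim:max_color_deg2}, the number of $y$ with $\chi_H(xy) = \{1,2\}$ is at most $\alpha n < \eta n/4$. Therefore
\[ d_\emptyset(x) \;\geq\; |N_1(x) \cap N_2(x)| - d_{\{1,2\}}(x) \;>\; (1-\eta/4)n - \eta n/4 \;=\; (1-\eta/2)n, \]
which contradicts $d_\emptyset(x) \leq \beta n < (1-\eta/2)n$ from Claim~\ref{claim:max_nonedge_deg2}. Hence $z < 1-\eta/8$.

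I do not anticipate a serious obstacle: the heavy lifting is done by the two preparatory claims, and the present argument is a two-line pigeonhole that pinches $N_1(x) \cap N_2(x)$ between the color-degree bound on $\{1,2\}$ and the non-edge-degree bound at $x$. This replaces the convexity/numerical argument of Lemma~\ref{lem:nbhd_z1}; in the rainbow-clique setting one could exploit $\sum_i |N_i(x)| \leq 2n$ directly, but in the presence of non-edges a vertex can lie in many $N_i(x)$, and that difficulty has been absorbed into the bound on $\beta$.
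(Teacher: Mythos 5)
Your proof is correct and follows essentially the same route as the paper: both arguments rest on the observation that any $y$ in the intersection of two neighborhoods $N_i(x)\cap N_j(x)$ must satisfy $\chi_H(xy)=\emptyset$ or $\chi_H(xy)=\{i,j\}$, so the intersection has size at most $d_{\{i,j\}}(x)+d_\emptyset(x)\le(\alpha+\beta)n$, and then inclusion--exclusion together with Claims~\ref{claim:max_color_deg2} and~\ref{claim:max_nonedge_deg2} pinches $|Z(x)|$ below $(1-\eta/8)n$. The paper states this directly as $|N_i(x)|+|Z(x)|\le(1+\alpha+\beta)n<(2-\eta/4)n$ while you run it as a contradiction, but the content is identical.
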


\begin{proof}
    Let $x \in V(H)$ such that $z=|Z(x)|/n$.  Suppose $x \in V_i$ and $Z(x) = N_j(x)$ for distinct $i,j \in [k]$.  Then we want to bound $|N_i(x) \cap Z(x)|$.
    Suppose $y \in N_i(x) \cap Z(x)$.  If $xy \in E$, then $i \in \chi_H(xy)$ and $j \in \chi_H(xy)$, so $\chi_H(xy) = \{i,j\}$.  Thus $|N_1(x)\cap Z(x)| \leq d_{\{i,j\}}(x) + d_\emptyset(x)$.
    It follows that
    $$|N_i(x)|+|Z(x)|
    =|N_i(x) \cup Z(x)|+|N_i(x) \cap Z(x)| \le n + d_{\{i,j\}} (x) + d_\emptyset(x) \leq (1+\alpha+\beta) n.$$ Thus $|N_i(x)| + |Z(x)| < (2-\eta/4)n$ by Claims~\ref{claim:max_color_deg2} and~\ref{claim:max_nonedge_deg2}.  Since $|Z(x)| \leq |N_i(x)|$, this gives $z < 1-\eta/8$.
\end{proof}

\subsection{One large part}

We now take care of the situation when one of the $V_i$'s is very large.

\begin{lemma}\label{lem:large_V2}
    $|V_i| \le (1-1/3k)n$ for all $i \in [k]$.
\end{lemma}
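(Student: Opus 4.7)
The plan is to mimic the proof of Lemma~\ref{lem:large_V1}, replacing the direct clique bound with the partitioning bound Lemma~\ref{lem:partition}(b) and using $z < 1 - \eta/8$ (from Lemma~\ref{lem:nbhd_z2}) in place of $z < 1/2$. Suppose for contradiction that, without loss of generality, $|V_1| > (1 - 1/(3k))n$, so fewer than $n/(3k)$ vertices lie outside $V_1$.

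For $x \in V_1$, the defining property of $V_1$ forces $|N_2(x)| \le |Z(x)| \le zn$, and Lemma~\ref{lem:partition}(b) then gives $d_2(x) \le (zn/(k-1))^{k-1}$. For the fewer than $n/(3k)$ vertices $x \notin V_1$, the trivial bound $d_2(x) \le (n/(k-1))^{k-1}$ suffices. Since $R$ is rainbow, connected, and has minimum degree at least $2$, its automorphism group is trivial, so $\sum_{x} d_2(x) = I(R,H) \ge a\binom{n}{k}$. Combining these inequalities with (\ref{eq:nk_nk}) reproduces exactly the key inequality (\ref{eqn:large1}):
\[ \frac{(k-1)^{k-1}}{k^k - k} < (1 + \vep)\paren{z^{k-1} + \frac{1}{3k}}. \]

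The main obstacle is verifying that this inequality fails under the weaker bound $z < 1 - \eta/8$, since $z$ may be close to $1$. The minimum-degree hypothesis $\eta > C\log k/(k-1)$ saves the day: it gives
\[ z^{k-1} \le \exp\paren{-\eta(k-1)/8} \le k^{-C/8}, \]
so for $C$ sufficiently large, $z^{k-1}$ is negligible compared to $1/(3k)$. Meanwhile $(k-1)^{k-1}/(k^k - k) > (k-1)^{k-1}/k^k > 1/(ek)$ by (\ref{eq:e_inverse1}), and since $e < 3$ we have $1/(ek) > (1+\vep)/(3k)$ whenever $\vep$ is small. Choosing the absolute constant $C$ in Theorem~\ref{thm:main2} large enough to absorb $k^{-C/8}$ for all $k$ above a suitable threshold then produces the contradiction and completes the proof.
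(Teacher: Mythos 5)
Your proposal is correct and follows essentially the same route as the paper: bound $d_2(x)$ by $(zn/(k-1))^{k-1}$ for $x\in V_1$ via Lemma~\ref{lem:partition}(b), use the trivial bound for the at most $n/(3k)$ remaining vertices, and then kill $z^{k-1}\le k^{-C/8}$ against the gap between $1/(ek)$ and $(1+\vep)/(3k)$. The extra remark about the triviality of $\operatorname{Aut}(R)$ is a valid (if unstated in the paper) justification of $\sum_x d_2(x)=I(R,H)$.
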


\begin{proof}
    By contradiction, WLOG suppose that $|V_1|>(1-1/3k)n$. If $x \in V_1$, then $|N_1(x)|\ge |N_i(x)|$ for all $i>1$ so
    $|N_2(x)| \le |Z(x)| \le z n$.  Applying Lemma~\ref{lem:partition}(b) to $d_2 (x)$ gives 
    $$a{\binom{n}{k}} \le I(R, H) = \sum_{x \in V(H)} d_2(x) \le |V_1| \paren{\frac{zn}{k-1}}^{k-1} + \frac{n}{3k} \paren{\frac{n}{k-1}}^{k-1} \leq \left(z^{k-1}+\frac{1}{3k}\right) \frac{n^k}{(k-1)^{k-1}}.$$ 
    Rearranging and using $n_0^k/(n_0)_k < 1.01$ as assumed in (\ref{eq:nk_nk}), we get
    \begin{equation*}
        \frac{(k-1)^{k-1}}{k^k-k} \leq 1.01 \paren{z^{k-1} + \frac{1}{3k}}.
    \end{equation*}
    Using the standard inequality (\ref{eq:e_inverse1}) and then Lemma~\ref{lem:nbhd_z2} gives
    \[ \paren{\frac1{1.01e} - \frac13}\frac1k <  z^{k-1} < \paren{1-\frac\eta8}^{k-1} \leq \exp( -C\log k/8 ) = k^{-C/8}. \]

    For any $C>8$ this fails to hold for sufficiently large $k$.
    \end{proof}

\subsection{Counting the copies of $R$ in $H$}\label{sec:counting_copies}

The way we count copies of $R$ in $H$ is very similar to the previous section and to~\cite{MR}.  While we do not have as much information in this case, without a focus on optimizing for small $k$, we allow ourselves to be less strict with the counting arguments.

Call a copy $f$ of $R$ in $H$ {\em transversal} if it includes exactly one vertex in $V_i$ for all $i \in [k]$. We partition the copies of $R$ in  $H$  as $H_m \cup H_g \cup H_b$ where $H_m$ comprises those copies that lie entirely inside some $V_i$, $H_g$ comprises those copies that intersect every $V_i$ whose edge coloring coincides with the natural one given by the vertex partition (meaning the map from $R$ to $H$ takes vertex $i$ to a vertex in $V_i$), and $H_b$ comprises all other copies of $R$ (these include transversal copies, but some vertex in any such copy will be in an inappropriate $V_i$).  Thus a transversal copy $f$ is in $H_b$ if and only if the unique map $\phi : [k] \to f$ with $\phi(i) \in V_i$ for all $i$ is not a graph isomorphism from $R \to H[f]$.  Let $h_m=|H_m|, h_g=|H_g|$ and $h_b=|H_b|$ so that
$$I(R, H) = h_m+h_g+h_b.$$

We will bound each of these three terms separately.  As in Section~\ref{sec:counting1}, let $D$ be the set of all pairs $\{v,w\}$ such that $v \in V_i, w \in V_j,$ and $\chi_H(vw) \ne \chi_R(ij)$ where $i \ne j$.  Let $\delta := |D|/\binom n2$.  The identical reasoning as in Section~\ref{sec:counting1} gives the first two bounds
\begin{equation} \label{eqn:hmbound2}
	h_m = \sum_{j=1}^k I(R, H[V_j]) \le \sum_{j=1}^k  I(R, n_j)
\end{equation}
and

\begin{equation} \label{eqn:delta+2}
	h_g \le\prod_{\ell=1}^k n_{\ell}\left(1-\frac{\delta{\binom{n}{2}}}{\sum_{1\le i<j\le k}n_in_j}\right) = \prod_{\ell=1}^k n_{\ell}\left(1-\frac{\delta{\binom{n}{2}}}{{\binom{n}{2}} - \sum_i {\binom{n_i}{2}}}\right).
\end{equation}

Our next task is to upper bound $h_b$.  This argument must be carried out differently.  For a vertex $x \in V(H)$ and $j \in [k]$, recall that $N_j(x) \subset V(H)$ is the set of  $y$ such that $x,y$ lie in a copy of $R$ with $x$ playing the role of vertex $j$ in $R$.
Let us enumerate the set $J$ of ordered pairs $(e,f)$ where $e\in D, f \in H_b$, and $e \subset f$.



We must show that each $f \in H_b$ contains an edge in $D$.  If $f$ is transversal, then as we have noted, the natural map is not a graph isomorphism. Thus there is some incorrectly colored edge which is in $D$.  If $f$ is not transversal, there is some $i \in [k]$ such that $|f \cap V_i|\ge 2$. Note that $f \notin H_m$, so $|f \cap V_i| < k$. As $R$ is connected, there exist $v \in V_i,~u \in V_j$ for some $j \ne i$ such that $vu$ is an edge in $f$. Since $|f \cap V_i| \geq 2$, choose also $w \in f \cap V_i$ with $w \ne v$.
If $\chi_R (ij) = \emptyset$ then $vu \in D$. If $\chi_R (ij) = \{ i, j\},$ then as $\chi_H (vu) = \chi_H (wu) = \{ i, j\}$ would contradict that $f$ is a copy of $R$ in $H$, we must have that $uv$ or $uw$ in $D$.
This gives us that \[ h_b \le |J| .\]

To bound $|J|$ from above, we start by choosing some bad edge $vw \in D$.  Let $f \subset V(H)$ such that $(vw,f) \in J$.  Either $v \in V_i$ does not play the role of $i$ or $w \in V_j$ does not play the role of $j$ in $f$.  Then $f \subset N_\ell (v) \cup \{ v \}$ for some $\ell \neq i$ or $f \subset N_\ell(w) \cup \{w\}$ for some $\ell \neq j$.  We have $|N_\ell(v)|,|N_\ell(w)| \leq zn$ by the definition of $z$ and the partition $V_1 \cup \cdots \cup V_k = V(H)$.  By the final statement of Lemma~\ref{lem:partition}, 
\[ |J| 
\le \sum_{vw \in D} \paren{\sum_{\ell \ne i} \paren{\frac{|N_\ell(v)|}{k-2}}^{k-2} + \sum_{\ell \ne j} \paren{\frac{|N_\ell(w)|}{k-2}}^{k-2}} \leq 2|D| (k-1) \paren{\frac{zn}{k-2}}^{k-2} . \] 

Thus, recalling that $\delta := |D|/\binom n2$, we obtain \begin{equation}\label{eqn:delta-2} h_b \leq 2\delta (k-1) \binom{n}{2} \paren{\frac{zn}{k-2}}^{k-2}.
\end{equation}

Using (\ref{eqn:hmbound2}), (\ref{eqn:delta+2}), and (\ref{eqn:delta-2})  we obtain\begin{equation}\label{eqn:bound2}
    I(R, n) \leq \sum_{i} I(R, n_i) +
    \prod_{\ell} n_{\ell}\left(1-\frac{\delta{\binom{n}{2}}}{{\binom{n}{2}} - \sum_i {\binom{n_i}{2}}}\right)
    + 2\delta (k-1) \binom{n}{2} \paren{\frac{zn}{k-2}}^{k-2}.
\end{equation}
Our final task is to upper bound the RHS.

As in Section~\ref{sec:counting_copies}, we see that $\delta \in I\df \left[0, 1 - \sum_i {\binom{n_i}{2}}/{\binom{n}{2}} \right]$. Viewing (\ref{eqn:bound2}) as a linear function of $\delta$, it again suffices to check the endpoints of $I$.

\subsection{The extremal case}\label{sec:extremal2}

\begin{claim}
    If $\delta = 0$, then $\ind(R) \le a$.
\end{claim}

\begin{proof}
    If $\delta=0$, then (\ref{eqn:bound2}) implies that
    \begin{equation*}
    I(R, n) \le \sum_{i=1}^k I(R, n_i) + \prod_{i=1}^k n_i.\end{equation*}

    This is the same equation as (\ref{eqn:I(R,n)case1_1}), and we have all the same assumptions.  The same argument as in Section~\ref{sec:extremal1} derives a contradiction.
\end{proof}

\subsection{The absurd case}

Now, we consider the other endpoint of $I$.

\begin{claim}
    If $\delta = 1 - \sum_i {\binom{n_i}{2}}/{\binom{n}{2}}$, then $\ind(R) \le a$.
\end{claim}

\begin{proof}
    If $\delta=1 - \sum_i {\binom{n_i}{2}}/{\binom{n}{2}}$, then (\ref{eqn:bound2}) implies that
    \begin{equation} \label{eqn:final2} I(R, n) \le \sum_{i=1}^k I(R, n_i) + 2(k-1) \sum_{i \neq j} n_in_j \left(\frac{z}{k-2}\right)^{k-2}n^{k-2}.\end{equation}
   This is similar to (\ref{eqn:final1}) with an extra factor of approximately $k^2$ in the second term.  We can bound the first sum using the same techniques as in Section~\ref{sec:extremal1}, giving (\ref{eqn:sumIconstant1}):

    \begin{equation}\label{eqn:sumIconstant2}
        \frac{1}{\binom nk} \sum_{i=1}^k I(R,n_i) \leq \ell\frac{\binom{c_0}{k}}{\binom{n}{k}} + (\ind(R) + \vep) \sum_{i=\ell+1}^k p_i^k < 0.74 \ind(R).
    \end{equation}
    
    We now bound the second term.  Dividing by $\binom nk$, we reorganize
    \[ \frac{2}{\binom nk} (k-1) \paren{\sum_{i \ne j} n_in_j}\left(\frac{z}{k-2}\right)^{k-2}n^{k-2} = 2 \cdot \frac{(k-1)(k^k-k)}{(k-2)^{k-2}} \cdot \frac{n^k}{(n)_k} \cdot \paren{\sum_{i \ne j} p_i p_j} z^{k-2}a. \]

    We first relax $(k-1)(k^k-k) < k^{k+1}$.  Observe that $k^{k-2}/(k-2)^{k-2} \leq e^2$.  Thus this first quotient is at most $e^2k^3$.  For $n>n_0$, we have $n^k/(n)_k < 1 + \vep \leq 1.01$.  Finally, $\sum_{i \neq j} p_ip_j = (1-\sum p_i^2) / 2 \leq (1-1/k) / 2 \leq 1/2$  as $\sum p_i^2$ is minimized when $p_i = 1/k$ for all $i$.  Thus
    \[ \frac{2}{\binom nk} (k-1) \paren{\sum_{i \ne j} n_in_j}\left(\frac{z}{k-2}\right)^{k-2}n^{k-2} \leq 2 \cdot e^2 k^3 \cdot 1.01 \cdot \frac{1}{2} \cdot z^{k-2}a = 1.01e^2k^3 z^{k-2} a. \]

    By Lemma~\ref{lem:nbhd_z2}, we have
    \[ z^{k-2} \leq (1-\eta/8)^{k-2} \leq \exp\paren{-\frac{(k-2)\eta}8} \leq \exp\paren{-\frac C8 \paren{1 - \frac{1}{k-1}} \log k} < k^{-0.9C/8}. \]

    We again used $k \geq 11$ here.  Thus for $C > 24/0.9 \approx 26.67$, we have $1.01e^2k^3z^{k-2}a < 0.25a$ for large enough $k$.
    Recalling that $a<\ind(R)$, plugging this and (\ref{eqn:sumIconstant2}) into (\ref{eqn:final2}) gives
    \[ \ind(R,n) \leq 0.74 \ind(R) + 0.25 \, a < 0.99\ind(R).\]
    This  contradiction completes the proof of the claim and the theorem.
\end{proof}

\section{Disconnected rainbow graphs}\label{sec:matching}













In this section, we show that rainbow graphs with multiple connected components are not fractalizers.

Let $R = (V,E)$ be a rainbow graph with $k$ vertices and $\ell > 1$ connected components.  Let $R = R_1 \cup \cdots \cup R_\ell$ be the connected components of size $c_1,\ldots,c_\ell$ respectively.  Assume also $c_i \geq 2$ for all $i$ (no isolated vertices).  We will show that $R$ is not a fractalizer.

We begin by upper bounding the number of copies $I(R,G_n)$ for $G_n \in \mathcal{G}_R(n)$ an iterated balanced blowup.  Then for any $i \in [\ell]$, by the same argument as for computing the inducibility of the iterated balanced blowup (see e.g.~\cite{PG}),
\[ I(R_i,G_n) = \paren{\frac nk}^{c_i} + k\paren{ \frac{n}{k^2}}^{c_i} + k^2\paren{\frac{n}{k^3}}^{c_i} + \cdots = (1+o(1)) \frac{n^{c_i}}{k^{c_i}-k}.  \]

Any $S \subset V(G_n)$ with $G_n[S] \cong R$ has a unique partition $S = S_1 \cup \cdots \cup S_\ell$ where $G_n[S_i] \cong R_i$.  Thus we can upper bound
\begin{equation}\label{eqn:ind_oneblowup}
    I(R,G_n) \leq \prod_{i=1}^\ell I(R_i,G_n) = (1+o(1))n^k \prod_{i=1}^\ell \frac{1}{k^{c_i}-k}.
\end{equation}

However, consider instead the family of graphs $\mathcal{H}(n)$ consisting of separate iterated balanced blowups of each part.  Formally, $H\in \mathcal{H}(n)$ if $|V(H)| = n$ and we have a partition $V(H) = V_1 \cup \cdots \cup V_\ell$ with the following properties:
\begin{enumerate}
    \item For all $i \in [\ell]$, $\big| |V_i| - \frac{c_i}{k} n \big| \leq 1$.
    \item For all $i \in [\ell]$, the induced subgraph $G[V_i] \in \mathcal{G}_{R_i}(|V_i|)$.
    \item For all $v \in V_i, w \in V_j$ with $i \ne j$, we have $vw \notin E(H)$.
\end{enumerate}

In $\mathcal{H}(n)$, there are no edges between any copy of $R_i$ and any copy of $R_j$ for distinct $i,j$.  Since $R$ is rainbow, copies of each component $R_i$ exist only in $V_i$.  Then for $H_n \in \mathcal{H}(n)$, we have
\begin{equation}\label{eqn:ind_manyblowup}
    I(R,H_n) = \prod_{i=1}^\ell I(R_i,H_n[V_i]) = \prod_{i=1}^\ell (1+o(1)) \frac{c_i!}{c_i^{c_i}-c_i} \binom{\frac{c_i}{k}n}{c_i} = (1+o(1))n^k \prod_{i=1}^\ell \frac{1}{k^{c_i}-k(\frac{k}{c_i})^{c_i-1}}.
\end{equation}

Comparing (\ref{eqn:ind_oneblowup}) with (\ref{eqn:ind_manyblowup}), we subtract larger numbers in the denominator of (\ref{eqn:ind_manyblowup}), so the family of graphs $\mathcal{H}(n)$ induces asymptotically more copies than the family $\mathcal{G}_R(n)$.  Thus $R$ is not a fractalizer.  Since $R$ was generic, disconnected rainbow graphs without isolated vertices are not fractalizers.







\section{Appendix}

\begin{proof}[Proof of (\ref{eqn:pq})]
    Let $q, q' \ge 0$ and $t, t'>0$. Recall that $p(q,t)$ is the maximum of $\prod_i q_i$ where $q_1 + \cdots + q_t = q$ and each $q_i \ge 0$ is an integer. Let $q_1, \ldots q_t$ integers such that $p(q, t) = \prod_{i = 1}^t q_i$ and $q_1', \ldots q_{t'}'$ integers such that $p(q', t') = \prod_{i = 1}^{t'} q_i'$. Then, \[
    q_1 + \cdots + q_t + q_1'+ \cdots + q_{t'}' = q + q'.
    \] Thus, the fact that $p(q +q', t + t')$ is a maximum gives that \[
    p(q +q', t + t') \ge \prod_{i = 1}^t q_i \prod_{i = 1}^{t'} q_i' = p(q, t) p(q', t')
    \] as desired.
\end{proof}

\begin{proof}[Proof of (\ref{eqn:max1})]
    We will show that \[\frac{1}{k^{k-1}-1} > 1.4 \left(\frac{0.6}{k-2}\right)^{k-2}\] for all $k \geq 11$. This is true for $k=11$. By  (\ref{eq:e_inverse1}) and the fact that $k \geq 11$, \begin{align*}
        \frac{1}{k^{k-1}-1} &\geq \frac{1}{e (k-1)^{k-1}}.
    \end{align*} We will prove that \[\frac{1}{e (k-1)^{k-1}} > 1.4 \left(\frac{0.6}{k-2}\right)^{k-2}\] for $k \ge 12$ by induction on $k$. For $k = 12$, plugging in certifies that this is true. By the inductive hypothesis, assume that \begin{equation} \label{eq:cd}
        \frac{1}{e (k-2)^{k-2}} > 1.4 \left(\frac{0.6}{k-3}\right)^{k-3}.
    \end{equation} We see that \begin{align} \label{eq:ab}
        &\frac{(k-2)^{k-2}}{(k-1)^{k-1}} > \frac{0.6 (k-3)^{k-3}}{(k-2)^{k-2}}, \text{ or equivalently } f(k) := \frac{(k-2)^{2k-4}}{(k-1)^{k-1} (k-3)^{k-3}} > 0.6
    \end{align} since \[ f(11) \approx 0.89 > 0.6 \] and \begin{align*} 
        \frac{d}{dk} f(k) &= - \frac{(k-2)^{2k-4}}{(k-1)^{k-1}(k-3)^{k-3}} \cdot\ln \paren{ \frac{(k-1) (k-3)}{(k-2)^2} } \geq 0
    \end{align*} since $(k-2)^2 > (k-1) (k-3)$. Then, by (\ref{eq:cd}) and (\ref{eq:ab}), we see that \begin{align*}
         \frac{1}{e (k-1)^{k-1}} &= \frac{1}{e (k-2)^{k-2}} \cdot \frac{(k-2)^{k-2}}{(k-1)^{k-1}} \\[.2 cm]
         &> 1.4 \left(\frac{0.6}{k-3}\right)^{k-3} \cdot \frac{0.6 (k-3)^{k-3}}{(k-2)^{k-2}} = 1.4 \left(\frac{0.6}{k-2}\right)^{k-2}.
    \end{align*}
\end{proof}

\begin{proof}[Proof of (\ref{eqn:large1})]
    We will show that \[ \frac{1}{1 + \vep} \cdot \frac{(k-1)^{k-1}}{k^k-k} - \frac{1}{3k} \geq z^{k-1}\] for all $k \geq 11$. Recalling that $\vep < \gamma/100$ and $\gamma \leq 1$, we obtain \[ \frac{1}{1 + \vep} \cdot \frac{(k-1)^{k-1}}{k^k-k}  \ge \frac{100}{101} \cdot \frac{(k-1)^{k-1}}{k^k-k}. \] Plugging in  $k=11$, we see that $$\frac{100}{101} \cdot \frac{(k-1)^{k-1}}{k^k-k} - \frac{1}{3k} \ge z^{k-1}.$$ By (\ref{eq:e_inverse1}) and the fact that $k \geq 11$, \begin{align*}
        \frac{100}{101} \cdot \frac{(k-1)^{k-1}}{k^k-k} &\geq \frac{100}{101} \cdot \frac{1}{ek}.
    \end{align*} So, it suffices to show that \[ \frac{100}{101} \cdot \frac{1}{ek} - \frac{1}{3k} \geq z^{k-1}\] for all $k \geq 12$. We do so by induction on $k$. For $k = 12$, it can be verified directly. For the induction step, assume that $k \ge 13$ and   \[ \frac{100}{101} \cdot \frac{1}{e(k - 1)} - \frac{1}{3 (k-1)} \geq z^{k-2}. \] Using Lemma~\ref{lem:nbhd_z1}, we have $(k-1)/k \geq 12/13 > 0.5 > z$ and this yields \begin{align*}
         \frac{100}{101} \cdot \frac{1}{ek} - \frac{1}{3k} &= \paren{\frac{100}{101} \cdot \frac{1}{e(k - 1)}- \frac{1}{3 (k-1)}} \frac{k-1}{k} \geq z^{k-2} \cdot z = z^{k-1},
    \end{align*} completing the proof.
    \end{proof}


\begin{thebibliography}{99}

\bibitem{BHLP} J. Balogh, P. Hu, B. Lidick\'y, F. Pfender,
Maximum density of induced 5-cycle is achieved by an iterated blow-up of 5-cycle, {\em European J.  Combin.} \textbf{52} (2016), 47--58.

\bibitem{B} J. Balogh, P.  Hu, B.  Lidick\'y,  F. Pfender, J. Volec, M. Young,  Rainbow triangles in three-colored graphs. {\em J. Combin. Theory Ser. B} \textbf{126} (2017), 83--113.

\bibitem{CFS} D. Conlon, J. Fox, B. Sudakov,  Hypergraph Ramsey numbers. {\em J. Amer. Math. Soc.} \textbf{23} (2009), 247--266.

\bibitem{CY} T. Chao, H. Yu. Kruskal--Katona-Type Problems via Entropy Method. 2307.15379v2 [math.CO], arXiv, 2023.

\bibitem{EH}  P. Erdős, A. Hajnal, On Ramsey like theorems. Problems and results, in: Combinatorics, Proc. Conf.
Combinatorial Math., Math. Inst., Oxford, 1972, Math. Inst. Appl., Oxford, 1972, pp. 123–140, Inst.
Math. Appl., Southend, 1972.

\bibitem{FHL} J. Fox, H. Huang, C. Lee, Packing problems. Methods and Challenges in Extremal and Probabilistic Combinatorics workshop, Banff International Research Station, 2015.

\bibitem{FSW} J. Fox, L. Sauermann, F. Wei, On the inducibility problem for random Cayley graphs of abelian groups with a few deleted vertices {\em Random Structures Algorithms} \textbf{59} (2021), 554--615.

\bibitem{Hstar} H. Huang, On the maximum induced density of directed stars and related problems, {\em SIAM J. Discrete Math.}, 28-1 (2014), 92-98.

\bibitem{Huang} H. Huang, personal communication, 2019.

\bibitem{KNV} D. Kr\'al, S. Norin, J. Volec,  A bound on the inducibility of cycles, {\em J. Combin. Theory Ser. A} \textbf{161} (2019), 359--363.

\bibitem{LMP} B. Lidick\'y, C. Mattes, F. Pfender:
$C_5$ is almost a fractalizer, {\em J. Graph Theory} 104 (2023), 220--244.

\bibitem{MR} D. Mubayi, A. Razborov, Polynomial to exponential transition in Ramsey theory {\em  Proc. Lond. Math. Soc.} (3) \textbf{122} (2021), no. 1, 69--92.

\bibitem{N} V. Nikiforov, The number of cliques in graphs of given order and size, {\em Trans. Amer. Math. Soc.} \textbf{363} (2011), no. 3, 1599--1618.

\bibitem{PG} N. Pippenger, M. Golumbic,
The inducibility of graphs.
{\em J. Combin. Theory Ser. B} \textbf{19} (1975), no. 3, 189--203.

\bibitem{Raz} A. A. Razborov, On the minimal density of triangles in graphs, {\em Combin. Probab. Comput.} \textbf{17} (2008), no. 4,
603--618.

\bibitem{R} C. Reiher,  The clique density theorem, {\em Ann. of Math.} \textbf{184} (2016), no. 3, 683--707.

\bibitem{Y} R. Yuster. On the exact maximum induced density of almost all graphs and their inducibility. Technical Report 1801.01047 [math.CO], arXiv, 2018.
	
\end{thebibliography}
\end{document}